\documentclass[11pt]{article}

\usepackage{graphicx}
\usepackage{multirow}
\usepackage{amsmath,amssymb,amsfonts}
\usepackage{amsthm}
\usepackage{mathrsfs}
\usepackage{xcolor}
\usepackage{textcomp}
\usepackage{manyfoot}
\usepackage{booktabs}
\usepackage{listings}
\usepackage{xspace}
\usepackage[margin=1in]{geometry}
\usepackage[numbers,sort&compress]{natbib}
\usepackage[hidelinks]{hyperref}
\usepackage{orcidlink}
\usepackage{enumitem}
\setlist[itemize,1]{label=\textendash}

\theoremstyle{plain}
\newtheorem{theorem}{Theorem}
\newtheorem{proposition}[theorem]{Proposition}
\newtheorem{lemma}[theorem]{Lemma}
\newtheorem{corollary}[theorem]{Corollary}

\theoremstyle{definition}
\newtheorem{example}{Example}
\newtheorem{remark}{Remark}

\theoremstyle{definition}
\newtheorem{definition}{Definition}

\newcommand{\X}{\mathcal X}
\newcommand{\BT}{\mathcal T}
\newcommand{\AOD}{\operatorname{AOD}}
\newcommand{\eps}{\varepsilon}
\newcommand{\op}{\odot}
\newcommand{\QoI}{\mathcal Q}
\newcommand{\R}{\mathbb R}
\newcommand{\Curv}{\mathcal C}
\newcommand{\diam}{\operatorname{diam}}

\newcommand{\wv}{\mathbf w}
\newcommand{\Fw}[1]{F_{#1,\wv}^{\eps}}
\newcommand{\AODfw}{\AOD_{f,\wv}}
\newcommand{\AODfwQ}{\AOD_{f,\wv,\QoI}}

\newcommand*\ie{\text{i.e.,\,}{}}

\begin{document}

\title{Local Second-Order Bounds for Aggregation-Order Variation in Density Fusion}

\author{Ratan Bahadur Thapa \orcidlink{0009-0000-2368-5928}\\
Institute for Artificial Intelligence, University of Stuttgart,\\
Germany}
\date{}

\maketitle

\begin{abstract}
Distributed statistical analyses often aggregate local posterior or predictive densities by repeated pairwise fusion. When the binary fusion rule is nonassociative, changing the ordered aggregation tree can change the final density and the reported posterior summaries. We study this aggregation-order variation for smooth $f$-divergence balancing in a local density chart around a common reference density. Under square-root-transformed supplied weights and first-order nondegeneracy at each generated merge, every tree in a fixed finite family shares the same first-order coefficient, whereas the first potentially tree-dependent term appears at second order. We derive the explicit second-order coefficient, propagate it through every tree in the family by a recursion for the tree-indexed second-order density coefficient, and show that a finite three-state posterior contrast detects the resulting discrepancy. The same coefficient determines density-level and quantity-of-interest diameters, rotation bounds, lower bounds, a limitation of scalar calibration, and a local corrected chart representation. The resulting expansions are uniform over each fixed finite tree family satisfying the stated local conditions.
\end{abstract}

\noindent\textbf{Keywords:} distributed Bayesian inference; density aggregation; posterior fusion; f-divergence; opinion pooling; uncertainty quantification

\medskip
\noindent\textbf{MSC Classification:} 62F15; 62G05; 62B10; 62C10; 60B10

\section{Introduction}\label{sec:intro}
Distributed statistical analysis often computes local posterior or predictive density summaries and then combines them into a single distribution. Subset-posterior and divide-and-conquer Bayesian methods combine local posterior summaries~\cite{neiswanger2014asymptotically,scott2016bayes,srivastava2018scalable}. Predictive density methods and expert opinion pooling use related aggregation principles for probabilistic forecasts and expert distributions~\cite{stone1961opinion,genest1986combining,jacobs1995methods,gneiting2007strictly}. In many applications, privacy restrictions, memory limits, communication constraints, network topology, or hierarchical protocols require repeated pairwise fusion along an ordered binary aggregation tree~\cite{agarwal2013mergeable}.

Pairwise aggregation introduces an order question whenever the binary rule is nonassociative. Consider three local posterior densities $p_1,p_2,p_3$ and a binary fusion rule $\op$. The two outputs $(p_1\op p_2)\op p_3$ and $p_1\op(p_2\op p_3)$ may differ although both calculations use the same ordered inputs and the same rule. A bounded posterior mean, a tail probability, or a contrast between two hypotheses may then depend on the parenthesization. We call the resulting deterministic discrepancy aggregation-order variation.

We consider dominated probability densities, \ie densities defined with respect to a common dominating measure, and measure aggregation-order variation by a diameter over ordered binary aggregation trees. For fixed ordered inputs, the density-level diameter compares all outputs obtained by changing only the tree. To connect density variation with reported statistical summaries, we also use a quantity-of-interest diameter obtained by pairing each output with a bounded statistical functional, such as a bounded posterior mean, a posterior tail probability, a bounded loss, or a finite-state contrast.

The main analytic case is smooth $f$-divergence balancing. Given weighted endpoint densities $(a,r)$ and $(b,s)$, the binary rule chooses a density on the segment between $r$ and $s$ by balancing the two weighted endpoint divergences. Divergence-based statistical inference is classical, starting with Kullback--Leibler information, Ali--Silvey divergences, and Csisz\'ar divergences~\cite{kullback1951information,ali1966general,csiszar1967information}, and remains central in robust inference and density-based estimation~\cite{singh2021robust}. 

Let $\eps$ be a scalar local parameter. For each input density, let $h_i$ be the first-order zero-mass coefficient and let $B_i$ be the second-order zero-mass coefficient. Around a strictly positive reference density $p$, the local model is
\[p_i^\eps=p+\eps h_i+\eps^2B_i+o(\eps^2).\]
The common purely first-order convention is $B_i=0$. The binary balancing equation has a distinguished local root. Its first-order coefficient depends on endpoint weights through square roots. After replacing supplied weights $w_i$ by additive transformed leaf masses $G_i=\sqrt{w_i}$, every aggregation tree satisfying first-order nondegeneracy at its generated merges has the same first-order term. For a subtree $A$, the protocol carries $G_A=\sum_{i\in A}\sqrt{w_i}$; a merge of subtrees $A$ and $B$ uses the divergence-balancing weights $G_A^2$ and $G_B^2$. The first potentially tree-dependent coefficient occurs at second order.

\paragraph{\textbf{Our contributions are as follows.}} We formulate aggregation-order variation for pairwise density fusion as density-level and quantity-of-interest diameters over ordered binary aggregation trees. For smooth $f$-divergence balancing, we derive the local second-order coefficient that governs the tree-dependent term after square-root transformation of the supplied weights. We lift the binary coefficient to ordered binary trees satisfying first-order nondegeneracy at every generated merge by a recursion for the tree-indexed second-order density coefficient. We show, through a finite three-state posterior calculation, that a standard posterior contrast detects a nonzero order-$\eps^2$ discrepancy. We then derive rotation bounds, lower bounds, a limitation of scalar calibration, and a local corrected chart representation from the same second-order expansion. The corrected chart is local and asymptotic; finite-perturbation claims require separate remainder estimates.

\paragraph{\textbf{The paper is organized as follows.}} Section~\ref{sec:related} discusses related work, and Section~\ref{sec:aggregation} introduces binary aggregation trees and aggregation-order variation. Section~\ref{sec:local} proves the binary divergence expansion. Section~\ref{sec:trees} lifts the expansion to tree families satisfying the local hypotheses. Section~\ref{sec:consequences} gives the finite posterior witness and quantity-of-interest consequences. Section~\ref{sec:normalform} gives the corrected local chart. Section~\ref{sec:bounds} gives the diameter, rotation, lower-bound, and calibration statements, and Section~\ref{sec:conclusion} concludes. Appendices A--E give the extended proofs of the main results and the auxiliary derivations.

\section{Related Work}\label{sec:related}
Kullback--Leibler information, Ali--Silvey divergences, and Csisz\'ar divergences provide the classical calculus for divergence-based comparison of probability distributions \cite{kullback1951information,ali1966general,csiszar1967information}. Bregman centroids, information-geometric coordinates, and proper scoring rules give related convex principles for statistical summaries and predictive assessment \cite{banerjee2005clustering,nielsen2009sided,amari2016information,gneiting2007strictly}, including robust inference based on exponential-polynomial divergence \cite{singh2021robust}. We use the local Taylor expansion of $f$-divergences for a different purpose: to identify the coefficient through which a pairwise balancing rule transmits aggregation-tree dependence.

Distributed Bayesian computation combines subset posterior summaries under data partitioning through consensus Monte Carlo and embarrassingly parallel MCMC \cite{neiswanger2014asymptotically,scott2016bayes}. Wasserstein barycenter posterior aggregation gives a simultaneous geometric aggregate for subset posteriors \cite{agueh2011barycenters,srivastava2018scalable}. Density-based Bayesian modeling also appears in Bayesian quantile and expectile regression with discrete responses \cite{liu2021discrete}. These methods define or approximate a global posterior summary. We instead hold the mass-carrying divergence-balancing protocol and supplied weights fixed and quantify the local variation caused only by the aggregation tree.

Linear and logarithmic pooling and axiomatic analyses of probability aggregation study rules for combining expert distributions \cite{stone1961opinion,genest1986combining,jacobs1995methods}. Associativity determines whether repeated pooling is independent of aggregation order. We study the complementary regime in which a mass-carrying divergence-balancing protocol has a nonzero second-order reassociation coefficient detectable by ordinary statistical functionals.

Mergeable summaries give associative merge operations in database and streaming settings \cite{agarwal2013mergeable}. Thapa et al. prove exact order-invariance characterizations for local segment-valued density-fusion rules and also identify square-root effective weights from the local quadratic expansion of smooth endpoint-to-candidate $f$-divergence balancing \cite{thapa2026compositional}. We instead quantify the leading second-order aggregation-order variation when exact invariance fails.

\section{Density Aggregation}\label{sec:aggregation}
We now define the aggregation-order variation diameter over ordered binary trees and bound it by local reassociation defects.

Let $(\X,d)$ be a metric space, where elements of $\X$ represent uncertainty summaries and $d$ is the metric used to compare them. Let $\op:\X\times\X\to\X$ be a binary aggregation rule.  For an integer $n\ge2$, let $\BT_n$ denote the set of full ordered binary trees with $n$ leaves. For $T\in\BT_n$ and $\mathbf{x}=(x_1,\ldots,x_n)\in\X^n$, we write $F_T(\mathbf{x})$ for the output obtained by placing $x_i$ at leaf $i$ and applying $\op$ at every internal node. For a nonempty set $S\subseteq\X$, we write $\diam_{d}S=\sup\{d(u,v) \mid u,v\in S\}$ for the diameter of $S$ with respect to $d$.

\begin{definition}\label{def:puq}
For $n\ge2$ and $\mathbf{x}\in\X^n$, the aggregation-order variation diameter is
\[\AOD_{\op}(\mathbf{x})=\diam_{d}\{F_T(\mathbf{x}) \mid T\in\BT_n\}
 =\sup_{T,T'\in\BT_n}d(F_T(\mathbf{x}),F_{T'}(\mathbf{x})).\]
\end{definition}

Aggregation-order variation is the formal diameter of the outputs obtained from all ordered binary aggregation trees with the same ordered inputs and the same binary rule. 

When $\X$ is a class of probability densities on $(\Omega,\mathcal F,\mu)$, a quantity-of-interest class $\QoI$ is a set of measurable functions for which the relevant integrals are finite. The corresponding functional diameter is
\[\AOD_{\op,\QoI}(\mathbf{x})=
\sup_{\varphi\in\QoI}\,\, \sup_{T,T'\in\BT_n}
\left|\int_\Omega \varphi\{F_T(\mathbf{x})-F_{T'}(\mathbf{x})\}\,d\mu\right|.\]

If $\QoI$ is contained in the unit ball of $L^\infty(\mu)$, then $\AOD_{\op,\QoI}(\mathbf{x})$ is bounded by the $L^1$ diameter of the tree outputs.

Throughout, for an integrable signed density $g$, we use
\[
\|g\|_{\mathrm{TV}}=\int_\Omega |g|\,d\mu=\|g\|_1
\]
for the total variation norm of the signed measure $g\,d\mu$. For probability densities $q$ and $q'$, we use the total variation distance $d_{\mathrm{TV}}(q,q')=\frac12\|q-q'\|_1$. Thus,
\[
\sup_{\|\varphi\|_\infty\le1}\left|\int_\Omega\varphi g\,d\mu\right|=\|g\|_{\mathrm{TV}},
\]
whereas a leading coefficient stated for $d_{\mathrm{TV}}$ is one half of the corresponding $L^1$ coefficient.

For $x,y,z\in\X$, the associativity defect of the triple $(x,y,z)$ is
\[A(x,y,z)=d((x\op y)\op z,x\op(y\op z)).\]
A context is a tree expression with one hole. If $C[\cdot]$ is a context and the following supremum is finite, let
\[
\Lambda(C)=\sup\left\{\frac{d(C[u],C[v])}{d(u,v)} \mid u,v\in\X,\ d(u,v)>0\right\}.
\]
A rotation path from $T$ to $T'$ is a finite sequence of elementary reassociations between ordered binary trees; rotation connectivity is the standard combinatorial property of such trees~\cite{stasheff1963homotopy,mac1971categories}.

For the next proposition, let $T,T'\in\BT_n$ and let
$\gamma=(T_0,T_1,\ldots,T_m)$ be a rotation path from $T_0=T$ to $T_m=T'$. For $j=1,\ldots,m$, the $j$th rotation replaces a local subtree with values $(a_j,b_j,c_j)$ inside a surrounding context $C_j[\cdot]$. Assume that all constants $\Lambda(C_j)$ are finite. For a rotation $\rho$ on the path, we write $A_\rho$ for its local associativity defect and $\Lambda_\rho$ for the Lipschitz modulus of its surrounding context.

\begin{proposition}\label{prop:assoc}
For every rotation path $\gamma=(T_0,T_1,\ldots,T_m)$ from $T$ to $T'$,
\[
d(F_T(\mathbf{x}),F_{T'}(\mathbf{x}))
\le
\sum_{j=1}^{m}\Lambda(C_j)A(a_j,b_j,c_j).
\]
The aggregation-order variation satisfies
\[
\AOD_{\op}(\mathbf{x})\le
\sup_{T,T'\in\BT_n}\inf_{\gamma:T\leadsto T'}
\sum_{\rho\in\gamma}\Lambda_\rho A_\rho .
\]
\end{proposition}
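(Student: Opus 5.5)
The plan is to telescope along the rotation path, bound each step by a Lipschitz estimate on its context, and then optimize over paths. Fix a rotation path $\gamma=(T_0,\ldots,T_m)$ from $T$ to $T'$. The triangle inequality for $d$ gives
\[
d(F_T(\mathbf{x}),F_{T'}(\mathbf{x}))\le\sum_{j=1}^{m} d(F_{T_{j-1}}(\mathbf{x}),F_{T_j}(\mathbf{x})).
\]
So it suffices to bound each consecutive term by $\Lambda(C_j)A(a_j,b_j,c_j)$.

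For step $j$, write the two consecutive trees as evaluations of the same context with different plugged-in values. Since the rotation only reassociates a local subtree, $T_{j-1}$ and $T_j$ agree outside that subtree. Let $a_j,b_j,c_j\in\X$ be the values of the three sub-subtrees being reassociated, which are themselves computed from the fixed leaves. Then
\[
F_{T_{j-1}}(\mathbf{x})=C_j[(a_j\op b_j)\op c_j],\qquad F_{T_j}(\mathbf{x})=C_j[a_j\op(b_j\op c_j)],
\]
possibly with the two sides swapped depending on the direction of the rotation; the defect is symmetric in $d$, so the direction does not matter. Here $C_j[\cdot]$ means evaluating the remaining tree expression with the hole filled and all other leaves fixed.

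Set $u=(a_j\op b_j)\op c_j$ and $v=a_j\op(b_j\op c_j)$. If $d(u,v)>0$, the definition of $\Lambda(C_j)$ as a supremum gives $d(C_j[u],C_j[v])\le\Lambda(C_j)d(u,v)=\Lambda(C_j)A(a_j,b_j,c_j)$. If $d(u,v)=0$, then $u=v$ because $d$ is a metric, so both sides vanish. An empty context, where the rotation happens at the root, is the identity map with $\Lambda=1$ and is covered by the same argument. Summing over $j$ yields the first inequality.

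For the second statement, the first bound holds for every path, so take the infimum over rotation paths $\gamma:T\leadsto T'$. Rotation connectivity of $\BT_n$ guarantees that at least one path exists, so the infimum is not over an empty set. If some path involves an infinite $\Lambda_\rho$, that path's sum is $+\infty$ and the inequality holds trivially for it. Finally, take the supremum over pairs $(T,T')$, which matches Definition~\ref{def:puq}.

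The only delicate point is bookkeeping rather than analysis: identifying the context $C_j$ and the values $(a_j,b_j,c_j)$ precisely, since both depend on $\mathbf{x}$ through the unchanged parts of the tree. I would handle this by defining $C_j$ as a map $\X\to\X$ after substituting all the fixed leaf values, so that $\Lambda(C_j)$ is exactly the Lipschitz constant that the argument uses.
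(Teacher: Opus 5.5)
Your proposal is correct and follows the paper's proof: you bound each rotation step by $\Lambda(C_j)A(a_j,b_j,c_j)$ using the definition of $\Lambda(C_j)$, sum along the path with the triangle inequality, and then take the infimum over paths and the supremum over endpoint pairs. Your separate treatment of the case $d(u,v)=0$, of a rotation at the root (empty context), and of the direction of each rotation fills in details the paper leaves implicit.
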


\begin{proof}[Proof.]
For one elementary rotation,
\[F_{T_j}(\mathbf{x})=C_j[((a_j\op b_j)\op c_j)]\quad\text{and}\quad
F_{T_{j+1}}(\mathbf{x})=C_j[a_j\op(b_j\op c_j)].
\]
The definition of $\Lambda(C_j)$ gives
\[
d(F_{T_j}(\mathbf{x}),F_{T_{j+1}}(\mathbf{x}))\le \Lambda(C_j)A(a_j,b_j,c_j).
\]
Summing along $\gamma$ by the triangle inequality proves the first inequality. Taking the infimum over all paths and then the supremum over endpoint trees gives the diameter bound.
\end{proof}

\begin{corollary}\label{cor:assoc}
For a binary rule $\op$ on $\X$, the following statements are equivalent:
\begin{enumerate}
\item For all $n\ge2$, all $\mathbf{x}\in\X^n$, and all $T,T'\in\BT_n$, we have
$F_T(\mathbf{x})=F_{T'}(\mathbf{x})$.
\item For all $x,y,z\in\X$, we have $A(x,y,z)=0$.
\item The binary rule $\op$ is associative.
\end{enumerate}
\end{corollary}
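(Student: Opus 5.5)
The plan is to prove the cycle of implications (1)$\Rightarrow$(2)$\Rightarrow$(3)$\Rightarrow$(1). Since $d$ is a metric, $A(x,y,z)=0$ holds exactly when $(x\op y)\op z=x\op(y\op z)$. Hence (2)$\Leftrightarrow$(3) is immediate from the definition of the associativity defect, and this equivalence will be recorded first.

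For (1)$\Rightarrow$(2), I will specialize (1) to $n=3$ and the input vector $\mathbf{x}=(x,y,z)$. The set $\BT_3$ contains exactly two trees, the left comb and the right comb. Their outputs are $(x\op y)\op z$ and $x\op(y\op z)$. Equality of these outputs gives $A(x,y,z)=0$ for every triple.

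For (2)$\Rightarrow$(1), I will use Proposition~\ref{prop:assoc}. Fix $n\ge2$, $\mathbf{x}\in\X^n$, and $T,T'\in\BT_n$. By rotation connectivity of ordered binary trees, there is a rotation path $\gamma$ from $T$ to $T'$. Each step of $\gamma$ replaces a local subtree $(a_j\op b_j)\op c_j$ by $a_j\op(b_j\op c_j)$ inside a context $C_j$. The proposition assumes finite Lipschitz constants $\Lambda(C_j)$, and under (2) these need not be finite, so I will not rely on that bound. Instead I will argue directly. Under (2), the local subtrees satisfy $(a_j\op b_j)\op c_j=a_j\op(b_j\op c_j)$ as elements of $\X$. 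Substituting equal elements into the same context yields equal outputs, because $C_j[\cdot]$ is a function of its hole value. Therefore $F_{T_{j-1}}(\mathbf{x})=F_{T_j}(\mathbf{x})$ for each $j$, and chaining these equalities along $\gamma$ gives $F_T(\mathbf{x})=F_{T'}(\mathbf{x})$. Alternatively, whenever the constants $\Lambda(C_j)$ are finite, the bound in Proposition~\ref{prop:assoc} is a sum of terms $\Lambda(C_j)\cdot 0=0$.

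I expect the main obstacle to be purely bookkeeping. The step that needs care is the observation that the Lipschitz finiteness hypothesis is unnecessary, since a context is a well-defined function of its hole value. Together with rotation connectivity, which the paper takes as standard, this closes the argument.
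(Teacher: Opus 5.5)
Your proof is correct and follows essentially the same route as the paper. Both arguments obtain (2)$\Leftrightarrow$(3) from the definition of $A$ and the metric axioms, use the $n=3$ specialization for the converse, and prove the direction yielding (1) by repeated reassociation; you make that step explicit via rotation connectivity and the fact that a context is a function of its hole value, and your remark that finiteness of the constants $\Lambda(C_j)$ from Proposition~\ref{prop:assoc} is unnecessary is a valid refinement rather than a different approach.
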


\begin{proof}
By definition of $A$, statements~(2) and~(3) are equivalent. Under~(3),
repeated reassociation shows that all parenthesizations of
$x_1\op\cdots\op x_n$ have the same value, proving~(1). Conversely,
applying~(1) with $n=3$ to the two binary trees yields
$(x\op y)\op z=x\op(y\op z)$ for all $x,y,z\in\X$, proving~(3).
\end{proof}

\section{Local Balancing}\label{sec:local}

The preceding section isolates reassociation as the algebraic source of aggregation-order variation. We next compute the local defect for smooth $f$-divergence balancing. 

Let $(\Omega,\mathcal F,\mu)$ be a measure space, and let $p$ be a strictly positive probability density. For any measurable function $q$ with $q/p\in L^\infty(\mu)$,  we set $\|q\|_{p,\infty}=\|q/p\|_\infty$. 
Throughout, as $\eps\to0$, we write $O(\eps^k)$ when the remainder divided by $|\eps|^k$ remains bounded in the relevant norm, and $o(\eps^k)$ when this quotient converges to zero.
The local chart consists of densities
\[
r^\eps=p+\eps h_A+\eps^2B_A+o(\eps^2)\quad\text{and}\quad
s^\eps=p+\eps h_B+\eps^2B_B+o(\eps^2),
\]
where the remainders are measured in $\|\cdot\|_{p,\infty}$, and
\[
\int h_A\,d\mu=\int h_B\,d\mu=\int B_A\,d\mu=\int B_B\,d\mu=0.
\]
We assume that $h_A/p$, $h_B/p$, $B_A/p$, and $B_B/p$ are bounded. We write
\[
u_A=h_A/p,\quad u_B=h_B/p,\quad \Delta=u_B-u_A,
\quad
J=\int_\Omega \Delta^2p\,d\mu
\quad\text{and}\quad
L_3=\int_\Omega \Delta^3p\,d\mu.
\]

Let $f:(0,\infty)\to\R$ be convex and $C^3$ in a neighborhood of $1$. Throughout the local results,
\[
f(1)=0,
\quad f'(1)=0
\quad\text{and}\quad f''(1)>0.
\]
The associated $f$-divergence is
\[
D_f(r\|q)=\int_\Omega q f(r/q)\,d\mu.
\]
For endpoint weights $a,b>0$, the balancing rule selects a local solution $t_\eps\in[0,1]$ of
\[
aD_f(r^\eps\|(1-t)r^\eps+ts^\eps)=bD_f(s^\eps\|(1-t)r^\eps+ts^\eps).
\]
The local root is the branch converging to
$t_0=\frac{\sqrt b}{\sqrt a+\sqrt b}$.
We also write
$A_2=\frac{f''(1)}2$ and $A_3=\frac{f'''(1)}6$.
The nondegenerate case is $J>0$. If $J=0$, the two first-order endpoints coincide in $L^2(p\,d\mu)$ and root selection requires higher-order information.

\begin{lemma}\label{lem:chart}
For sufficiently small $|\eps|$, all segment densities $(1-t)r^\eps+ts^\eps$, $0\le t\le1$, are positive and uniformly comparable with $p$. In addition, $r^\eps/((1-t)r^\eps+ts^\eps)$ and $s^\eps/((1-t)r^\eps+ts^\eps)$ converge uniformly to $1$.
\end{lemma}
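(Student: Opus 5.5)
The plan is to work entirely in the normalized coordinates $r^\eps/p$ and $s^\eps/p$, where the chart assumptions become uniform $L^\infty$ statements. By hypothesis,
\[
\frac{r^\eps}{p}=1+\eps u_A+\eps^2\frac{B_A}{p}+\frac{R_A^\eps}{p},\qquad \Bigl\|\frac{R_A^\eps}{p}\Bigr\|_\infty=o(\eps^2),
\]
and the analogous expansion holds for $s^\eps$. Since $u_A$, $u_B$, $B_A/p$ and $B_B/p$ are bounded, we get
\[
\delta(\eps):=\Bigl\|\frac{r^\eps}{p}-1\Bigr\|_\infty+\Bigl\|\frac{s^\eps}{p}-1\Bigr\|_\infty\le C|\eps|
\]
for all sufficiently small $|\eps|$. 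Here $C$ depends only on $\|u_A\|_\infty$, $\|u_B\|_\infty$, $\|B_A/p\|_\infty$ and $\|B_B/p\|_\infty$, plus a bound on the remainder quotient, which is finite for small $|\eps|$.

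For the segment densities $q_t^\eps=(1-t)r^\eps+ts^\eps$, convexity of the combination gives
\[
\frac{q_t^\eps}{p}-1=(1-t)\Bigl(\frac{r^\eps}{p}-1\Bigr)+t\Bigl(\frac{s^\eps}{p}-1\Bigr).
\]
Hence $\|q_t^\eps/p-1\|_\infty\le\delta(\eps)$ uniformly in $t\in[0,1]$. Choose $|\eps|$ small enough that $\delta(\eps)\le 1/2$. Then $\tfrac12 p\le q_t^\eps\le\tfrac32 p$ $\mu$-a.e. for every $t$. Strict positivity of $p$ then gives positivity of all segment densities, and this two-sided bound is the claimed uniform comparability with $p$. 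Since the chart coefficients integrate to zero, each $q_t^\eps$ is also a probability density, although the lemma does not require this.

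For the ratios, write
\[
\frac{r^\eps}{q_t^\eps}-1=\frac{r^\eps/p-q_t^\eps/p}{q_t^\eps/p}.
\]
The numerator is bounded in sup norm by $\delta(\eps)$, and the denominator is at least $1/2$. So $\sup_{t\in[0,1]}\|r^\eps/q_t^\eps-1\|_\infty\le 2\delta(\eps)=O(\eps)\to0$. The same argument applies to $s^\eps/q_t^\eps$. This convergence is uniform both in $\omega$ (essential sup) and in $t\in[0,1]$, which is the sense needed later to Taylor expand $f$ around $1$ inside the divergence integrals.

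The only delicate point is bookkeeping. The remainders are $o(\eps^2)$ only in the weighted norm $\|\cdot\|_{p,\infty}$, so every bound must be taken after dividing by $p$ rather than in absolute terms. Also, statements like ``$q_t^\eps>0$'' should be read $\mu$-a.e. There is no genuine analytic obstacle.
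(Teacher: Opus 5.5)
Your proof is correct and follows the paper's own argument. That argument obtains uniform $1+O(\eps)$ bounds for $r^\eps/p$ and $s^\eps/p$, transfers them to the segment by convexity, and divides the endpoint ratios by the segment ratio; you additionally make explicit the bracket $\tfrac12 p\le(1-t)r^\eps+ts^\eps\le\tfrac32 p$ and the $2\delta(\eps)$ bound, which the paper leaves implicit.
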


\begin{proof}[Proof.]
The bounded-ratio assumptions give $r^\eps/p=1+O(\eps)$ and $s^\eps/p=1+O(\eps)$ uniformly. Convexity of the segment gives $((1-t)r^\eps+ts^\eps)/p=1+O(\eps)$ uniformly in $t$. Positivity and two-sided comparability follow for sufficiently small $|\eps|$. Dividing the endpoint ratios by the segment ratio gives uniform convergence of both likelihood ratios to $1$.

\end{proof}

The next lemma gives the root selection used by the binary expansion. The notation is that of Section~\ref{sec:local}, and the nondegeneracy condition $J>0$ holds.

\begin{lemma}\label{lem:root}
There exist $\eta>0$ and $\eps_0>0$ such that, for $0<|\eps|<\eps_0$, the balancing equation has exactly one solution in $(t_0-\eta,t_0+\eta)$. The solution converges to $t_0$ as $\eps\to0$.
\end{lemma}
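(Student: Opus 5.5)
We rescale the balancing equation by $\eps^{-2}$ and apply the implicit function theorem at $(\eps,t)=(0,t_0)$. Set $q_t^\eps=(1-t)r^\eps+ts^\eps$ and
\[
\Phi(\eps,t)=aD_f(r^\eps\|q_t^\eps)-bD_f(s^\eps\|q_t^\eps).
\]
Since $f(1)=f'(1)=0$, both divergences vanish to second order, and $\Phi$ is $O(\eps^2)$. We therefore work with $\Psi(\eps,t)=\eps^{-2}\Phi(\eps,t)$ for $\eps\ne0$ and aim to show that $\Psi$ extends continuously to $\eps=0$, uniformly in $t\in[0,1]$, with a nondegenerate simple zero at $t_0$.

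\textbf{Step 1 (expansion of $\Psi$).} By Lemma~\ref{lem:chart}, for small $|\eps|$ the ratios $r^\eps/q_t^\eps$ and $s^\eps/q_t^\eps$ lie in a fixed neighborhood of $1$ on which $f$ is $C^3$. Taylor's theorem then gives $f(x)=A_2(x-1)^2+R(x)$ with $|R(x)|\le C|x-1|^3$.

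We have $r^\eps-q_t^\eps=t(r^\eps-s^\eps)=-t\eps\Delta p+O(\eps^2)$ and $q_t^\eps=p(1+O(\eps))$, both uniformly in $t$ and in the $\|\cdot\|_{p,\infty}$ sense. Hence
\[
D_f(r^\eps\|q_t^\eps)=A_2t^2\eps^2J+O(|\eps|^3),\qquad D_f(s^\eps\|q_t^\eps)=A_2(1-t)^2\eps^2J+O(|\eps|^3),
\]
uniformly in $t\in[0,1]$. Consequently
\[
\Psi(\eps,t)=A_2J\bigl(at^2-b(1-t)^2\bigr)+O(|\eps|),
\]
uniformly in $t$. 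The limit $\psi_0(t)=A_2J(at^2-b(1-t)^2)$ vanishes on $[0,1]$ exactly at $t_0=\sqrt b/(\sqrt a+\sqrt b)$, where $\sqrt a\,t=\sqrt b(1-t)$. Its derivative there is $2A_2J(at_0+b(1-t_0))>0$, because $J>0$ and $f''(1)>0$.

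\textbf{Step 2 (uniform control of $\partial_t\Psi$; main obstacle).} A continuous uniform limit alone does not give uniqueness, so we need $\partial_t\Psi(\eps,t)\to\psi_0'(t)$ uniformly near $t_0$. Differentiating under the integral, which is allowed since all ratios are bounded and $f\in C^3$ locally, gives
\[
\partial_t D_f(r\|q_t)=\int (s-r)\bigl[f(x)-xf'(x)\bigr]\,d\mu,\qquad x=r/q_t.
\]
Now $g(x)=f(x)-xf'(x)$ satisfies $g(1)=0$ and $g'(x)=-xf''(x)$, so $g(x)=-f''(1)(x-1)+O((x-1)^2)$. Since $s-r=\eps\Delta p+O(\eps^2)$ and $x-1=-t\eps\Delta+O(\eps^2)$, we get
\[
\partial_t D_f(r\|q_t)=2A_2t\eps^2J+O(|\eps|^3).
\]
The analogous computation for $s$ gives $-2A_2(1-t)\eps^2J+O(|\eps|^3)$. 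Therefore $\partial_t\Psi=\psi_0'(t)+O(|\eps|)$ uniformly. The real work here is tracking that every remainder is uniform in $t$, using the $\|\cdot\|_{p,\infty}$ bounds and Lemma~\ref{lem:chart}.

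\textbf{Step 3 (conclusion).} Choose $\eta>0$ with $\psi_0'\ge c>0$ on $[t_0-\eta,t_0+\eta]$. For $|\eps|<\eps_0$, Step 2 gives $\partial_t\Psi\ge c/2$ on this interval, so $\Psi(\eps,\cdot)$ is strictly increasing there. By Step 1, $\Psi(\eps,t_0\pm\eta)$ has the sign of $\psi_0(t_0\pm\eta)$, which is $\pm$. The intermediate value theorem together with strict monotonicity yields exactly one zero $t_\eps\in(t_0-\eta,t_0+\eta)$. For $\eps\ne0$, the zeros of $\Psi$ and $\Phi$ coincide, so $t_\eps$ is the unique solution of the balancing equation in this interval.

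Convergence follows from the same estimates. Monotonicity gives $|t_\eps-t_0|\le (2/c)|\Psi(\eps,t_0)|$, and Step 1 gives $\Psi(\eps,t_0)=\psi_0(t_0)+O(|\eps|)=O(|\eps|)$. Hence $t_\eps\to t_0$ as $\eps\to0$.
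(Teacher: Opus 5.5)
Your proof is correct and follows essentially the same route as the paper: uniform convergence of $\eps^{-2}\Phi_\eps$ to $\Phi_2$, uniform convergence of $\eps^{-2}\partial_t\Phi_\eps$ via the exact identity $\partial_t[q_t f(r/q_t)]=(s-r)\{f(x)-xf'(x)\}$, and then strict monotonicity, endpoint signs and the intermediate value theorem. Despite your opening reference to the implicit function theorem, you never actually use it. The differences are minor: your mean-value bound $|t_\eps-t_0|\le(2/c)|\Psi(\eps,t_0)|=O(|\eps|)$ already gives the rate that the paper only establishes later in the proof of Theorem~\ref{thm:binary}, and you should state $\eta<\min\{t_0,1-t_0\}$ so that your estimates, proved for $t\in[0,1]$, cover the whole interval.
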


\begin{proof}[Proof.]
Let
\[
\Phi_\eps(t)=aD_f(r^\eps\|(1-t)r^\eps+ts^\eps)-bD_f(s^\eps\|(1-t)r^\eps+ts^\eps).
\]
The local Taylor expansion of the two divergences gives the uniform expansion, for $t$ in a neighborhood of $t_0$,
\[
\eps^{-2}\Phi_\eps(t)= \Phi_2(t)+\eps\Phi_3(t)+o(\eps),
\]
where $\Phi_3$ is bounded uniformly on that neighborhood and
\[
\Phi_2(t)=A_2J\{at^2-b(1-t)^2\}.
\]
The corresponding derivative expansion is
$\eps^{-2}\partial_t\Phi_\eps(t)= \Phi_2'(t)+o(1)$
uniformly near $t_0$. The bounded-ratio assumptions and the $C^3$ expansion of $f$ near $1$ justify the uniform expansion of both $\Phi_\eps$ and $\partial_t\Phi_\eps$ on that neighborhood. Indeed, if $p_t^\eps=(1-t)r^\eps+ts^\eps$, then differentiating the exact integrand gives
\[
\partial_t\left[p_t^\eps f\left(\frac{r^\eps}{p_t^\eps}\right)\right]
=(s^\eps-r^\eps)\left\{f\left(\frac{r^\eps}{p_t^\eps}\right)
-\frac{r^\eps}{p_t^\eps}f'\left(\frac{r^\eps}{p_t^\eps}\right)\right\},
\]
and the same identity with $s^\eps$ in place of $r^\eps$. Lemma~\ref{lem:chart}, bounded density ratios, and the $C^3$ expansion of $f$ near $1$ give the stated derivative expansion uniformly in $t$.
More explicitly, as $z\to0$,
\[
f'(1+z)=f''(1)z+\frac12f'''(1)z^2+o(z^2).
\]
Lemma~\ref{lem:chart} makes the endpoint-to-segment ratio increments converge uniformly to zero on the chosen compact neighborhood of $t_0$. A uniform Taylor bound and domination by an integrable multiple of $p$ then give the stated uniform derivative remainder.
The root $t_0$ is simple because
\[
\Phi_2'(t_0)=2A_2J\{at_0+b(1-t_0)\}>0.
\]
Choose $\eta>0$ with $\eta<\min\{t_0,1-t_0\}$ so that $\Phi_2'(t)>c>0$ for $|t-t_0|<\eta$. The stated derivative expansion gives $\partial_t\Phi_\eps(t)>0$ on the same interval for sufficiently small nonzero $\eps$. Therefore, $\Phi_\eps$ is strictly increasing on that interval. Since $\Phi_2(t_0-\eta)<0<\Phi_2(t_0+\eta)$ after shrinking $\eta$ if needed, the uniform expansion gives opposite signs for $\Phi_\eps$ at the two endpoints. The intermediate value theorem gives existence, and strict monotonicity gives uniqueness. Applying the endpoint sign argument on every smaller neighborhood of $t_0$ proves that $t_\eps\to t_0$.
\end{proof}

The next theorem is stated under the nondegeneracy condition $J>0$ and with the local root from Lemma~\ref{lem:root}.

\begin{theorem}\label{thm:binary}
For endpoint weights $a,b>0$, the local balancing root satisfies
\[
t_\eps=t_0+\eps\tau(a,b;u_A,u_B)+o(\eps),
\]
where
\[
\tau(a,b;u_A,u_B)=
\frac{A_3L_3\{a t_0^3+b(1-t_0)^3\}}
{2A_2J\{a t_0+b(1-t_0)\}}.
\]
The fused segment density has expansion
\[
(1-t_\eps)r^\eps+t_\eps s^\eps
=p+\eps H_{AB}+\eps^2B_{AB}+o(\eps^2),
\]
with
$H_{AB}=(1-t_0)h_A+t_0h_B$
and
$B_{AB}=(1-t_0)B_A+t_0B_B+\tau(a,b;u_A,u_B)(h_B-h_A)$.
\end{theorem}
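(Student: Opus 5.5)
The plan is to obtain $t_\eps$ from the balancing function $\Phi_\eps$ of Lemma~\ref{lem:root} by a one-step Newton (mean-value) argument about $t_0$. For that we need two ingredients: the derivative expansion $\eps^{-2}\partial_t\Phi_\eps(t)=\Phi_2'(t)+o(1)$, uniform near $t_0$, which Lemma~\ref{lem:root} already supplies, and a sharp evaluation of $\Phi_\eps$ at the single point $t=t_0$ to order $\eps^3$. Since $\Phi_\eps(t_\eps)=0$, the mean value theorem gives some $\xi_\eps$ between $t_0$ and $t_\eps$ with
\[
0=\Phi_\eps(t_0)+(t_\eps-t_0)\,\partial_t\Phi_\eps(\xi_\eps).
\]
By Lemma~\ref{lem:root}, $t_\eps\to t_0$, so $\eps^{-2}\partial_t\Phi_\eps(\xi_\eps)\to\Phi_2'(t_0)=2A_2J\{at_0+b(1-t_0)\}>0$. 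Hence it suffices to prove
\[
\Phi_\eps(t_0)=-\eps^3A_3L_3\{at_0^3+b(1-t_0)^3\}+o(\eps^3),
\]
because then $t_\eps-t_0=-\Phi_\eps(t_0)/\partial_t\Phi_\eps(\xi_\eps)=\eps\tau+o(\eps)$, with $\tau$ exactly as stated.

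To compute $\Phi_\eps(t_0)$, I will avoid expanding the segment density $q=p_t^\eps$ and the second-order terms $B_A,B_B$ separately, and instead use an exact factorization. Write $\delta^\eps=s^\eps-r^\eps=\eps h+\eps^2\beta+o(\eps^2)$ with $h=h_B-h_A$ and $\beta=B_B-B_A$. Then exactly
\[
r^\eps-q=-t\delta^\eps\quad\text{and}\quad s^\eps-q=(1-t)\delta^\eps .
\]
Put $z=\delta^\eps/q$, which tends to $0$ uniformly by Lemma~\ref{lem:chart}. Since $f(1)=f'(1)=0$ and $f$ is $C^3$, we have $f(1+x)=A_2x^2+A_3x^3+x^3\omega(x)$ with $\omega(x)\to0$. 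This gives
\[
\Phi_\eps(t)=A_2\{at^2-b(1-t)^2\}\int qz^2\,d\mu-A_3\{at^3+b(1-t)^3\}\int qz^3\,d\mu+R_\eps(t).
\]
The remainder satisfies $|R_\eps|\le C\sup|\omega(\cdot)|\int q|z|^3\,d\mu=o(\eps^3)$, using $|z|\le C|\eps|$ uniformly and the comparability $q\asymp p$ from Lemma~\ref{lem:chart}.

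The key point is that at $t=t_0$ the coefficient $at_0^2-b(1-t_0)^2$ vanishes identically, since $\sqrt a\,t_0=\sqrt b\,(1-t_0)$. The entire quadratic block therefore drops out, and with it every contribution of $B_A$, $B_B$ and of the $\eps$-dependence of $q$ at order $\eps^3$. What remains is $\int qz^3\,d\mu=\eps^3\int h^3/p^2\,d\mu+o(\eps^3)=\eps^3L_3+o(\eps^3)$, using $h/p=\Delta$, the bounded-ratio assumptions and $q/p=1+O(\eps)$. This yields the required value of $\Phi_\eps(t_0)$ and hence the root expansion.

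For the density expansion, substitute $t_\eps=t_0+\eps\tau+o(\eps)$ into
\[
(1-t_\eps)r^\eps+t_\eps s^\eps=r^\eps+t_\eps\delta^\eps .
\]
This equals $p+\eps h_A+\eps^2B_A+(t_0+\eps\tau)(\eps h+\eps^2\beta)+o(\eps^2)$, and collecting powers of $\eps$ gives exactly $H_{AB}=(1-t_0)h_A+t_0h_B$ and $B_{AB}=(1-t_0)B_A+t_0B_B+\tau(h_B-h_A)$. The remainders are in $\|\cdot\|_{p,\infty}$ because $h/p$ and $\beta/p$ are bounded.

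I expect the main obstacle to be the rigorous $o(\eps^3)$ control of $R_\eps(t_0)$. The function $f$ is only $C^3$, so only a little-$o$ Taylor remainder is available. The step relies on uniform smallness of $z$ (Lemma~\ref{lem:chart}) together with domination by $C|\eps|^3p$, which is integrable; this should suffice because $p$ is a probability density. The derivative expansion near $t_0$ is taken directly from Lemma~\ref{lem:root}.
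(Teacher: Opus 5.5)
Your proof is correct. It reaches the same coefficient $-\Phi_3(t_0)/\Phi_2'(t_0)$ as the paper, but by a different and leaner route.

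The paper proceeds in four steps:
\begin{enumerate}
\item It expands $r^\eps/p_t^\eps$ and $s^\eps/p_t^\eps$ to second order.
\item It computes the full $\eps^3$ coefficient $\Phi_3(t)$ on a neighborhood of $t_0$. This includes the block $A_2\{at^2-b(1-t)^2\}(2M-K(t))$, which comes from $B_A,B_B$ and from the first-order variation of $p_t^\eps$.
\item It proves $t_\eps-t_0=O(\eps)$ from the simple zero of $\Phi_2$.
\item It linearizes $\Phi_2$ at $t_0$ and uses continuity of $\Phi_3$.
\end{enumerate}
Only at the end do the $M$ and $K(t_0)$ terms cancel via $at_0^2=b(1-t_0)^2$.

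Your exact identities $r^\eps-p_t^\eps=-t\delta^\eps$ and $s^\eps-p_t^\eps=(1-t)\delta^\eps$ do more work. They show that the quadratic block equals $A_2\{at^2-b(1-t)^2\}\int(\delta^\eps)^2/p_t^\eps\,d\mu$ exactly, for every $\eps$. It therefore vanishes identically at $t_0$, so you never need $M$, $K(t)$, or the second-order ratio expansions. This also makes it transparent why $B_A$ and $B_B$ cannot enter $\tau$: they affect only the vanishing block and the $O(\eps^4)$ part of $\int (\delta^\eps)^3/(p_t^\eps)^2\,d\mu$.

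Your mean-value step then needs only two inputs: the single value $\Phi_\eps(t_0)$ to order $\eps^3$, and the uniform order-$\eps^2$ derivative expansion already established in Lemma~\ref{lem:root}. It delivers the $O(\eps)$ rate and the coefficient in one step. The paper's argument instead evaluates the $\eps^3$ expansion at the moving point $t_\eps$, so it relies on that expansion being uniform in $t$ near $t_0$. In exchange, the paper's route records the full cubic coefficient $\Phi_3(t)$ near $t_0$, not just its value at $t_0$.
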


The condition $J>0$ makes the denominator in $\tau$ positive. The coefficient $\tau$ depends only on the first-order perturbation ratios $u_A,u_B$, the endpoint weights, and the generator through the ratio $f'''(1)/f''(1)$.

\begin{proof}[Proof.]
Let $p_t^\eps=(1-t)r^\eps+ts^\eps$ and, within the proof, write $B_A=pb_A$ and $B_B=pb_B$. Lemma~\ref{lem:chart} permits uniform Taylor expansion in $t$. Direct division gives
\[
\frac{r^\eps}{p_t^\eps}=1-\eps t\Delta+
\eps^2t\{\Delta(u_A+t\Delta)-(b_B-b_A)\}+o(\eps^2) \quad\text{and}
\]
\[
\frac{s^\eps}{p_t^\eps}=1+\eps(1-t)\Delta+
\eps^2(1-t)\{(b_B-b_A)-\Delta(u_A+t\Delta)\}+o(\eps^2).
\]
Since $f(1+z)=A_2z^2+A_3z^3+o(z^3)$, let
\[
M=\int \Delta(b_B-b_A)p\,d\mu \,\,\,\,\text{and}\,\,\,\,
K(t)=\int \Delta^2(u_A+t\Delta)p\,d\mu .
\]
Multiplication by $p_t^\eps=p\{1+\eps(u_A+t\Delta)+O(\eps^2)\}$ yields
\[
\begin{aligned}
D_f(r^\eps\|p_t^\eps)&=A_2\eps^2t^2J
+\eps^3\{A_2t^2(2M-K(t))-A_3t^3L_3\}+o(\eps^3),\\
D_f(s^\eps\|p_t^\eps)&=A_2\eps^2(1-t)^2J
+\eps^3\{A_2(1-t)^2(2M-K(t))+A_3(1-t)^3L_3\}+o(\eps^3).
\end{aligned}
\]
Let
\[
\Phi_\eps(t)=aD_f(r^\eps\|p_t^\eps)-bD_f(s^\eps\|p_t^\eps).
\]
Then,
\[
\Phi_\eps(t)=\eps^2\Phi_2(t)+\eps^3\Phi_3(t)+o(\eps^3) \,\,\,\,\text{and}\,\,\,\,
\Phi_2(t)=A_2J\{at^2-b(1-t)^2\},
\]
where $\Phi_3(t)$ denotes the coefficient of $\eps^3$ determined by the preceding two divergence expansions.
The leading equation $\Phi_2(t)=0$ has the root $t_0=\sqrt b/(\sqrt a+\sqrt b)$ in $[0,1]$, and
\[
\Phi_2'(t_0)=2A_2J\{at_0+b(1-t_0)\}>0.
\]
The expansion also gives $t_\eps-t_0=O(\eps)$. Indeed, dividing the root equation by $\eps^2$ gives
\[
0=\Phi_2(t_\eps)+\eps\Phi_3(t_\eps)+o(\eps),
\]
Since $\Phi_2'(t_0)>0$, there are $c_0>0$ and a neighborhood $I$ of $t_0$ such that
\[
c_0|t-t_0|\le |\Phi_2(t)|,\qquad t\in I.
\]
The convergence $t_\eps\to t_0$, local boundedness of $\Phi_3$, and the root equation therefore give
\[
c_0|t_\eps-t_0|
\le |\Phi_2(t_\eps)|
\le C|\eps|+o(|\eps|),
\]
and consequently $t_\eps-t_0=O(\eps)$. Taylor expansion at $t_0$ and continuity of the explicitly stated coefficient $\Phi_3$ give
\[
\Phi_2(t_\eps)=\Phi_2'(t_0)(t_\eps-t_0)+o(\eps)
\quad\text{and}\quad
\Phi_3(t_\eps)=\Phi_3(t_0)+o(1).
\]
Consequently, the root equation satisfies
\[
0=\Phi_2'(t_0)(t_\eps-t_0)+\eps\Phi_3(t_0)+o(\eps).
\]
Dividing by $\eps$ yields
\[
\frac{t_\eps-t_0}{\eps}=-\frac{\Phi_3(t_0)}{\Phi_2'(t_0)}+o(1).
\]
Thus, $t_\eps=t_0+\eps t_1+o(\eps)$ with $t_1=-\Phi_3(t_0)/\Phi_2'(t_0)$.
The terms containing $M$ and $K(t_0)$ have the common factor $at_0^2-b(1-t_0)^2$ and cancel. The remaining cubic term is
\[
\Phi_3(t_0)=-A_3L_3\{a t_0^3+b(1-t_0)^3\}.
\]
Solving for $t_1$ gives the stated expression for $\tau$. Finally,
\[
(1-t_\eps)r^\eps+t_\eps s^\eps
=p+\eps\{(1-t_0)h_A+t_0h_B\}
+\eps^2\{(1-t_0)B_A+t_0B_B+t_1(h_B-h_A)\}+o(\eps^2),
\]
which proves the density expansion.
\end{proof}

\section{Tree Expansions}\label{sec:trees}

We now lift the binary formula to ordered aggregation trees and state the additional local conditions needed for the recursion. After the square-root transformation of weights, the first-order balancing coefficient becomes an ordinary weighted average. The second-order terms remain tree-dependent through the placement of the local $\tau$ contributions.

For leaves $i=1,\ldots,n$, let $\wv=(w_1,\ldots,w_n)\in(0,\infty)^n$ be the supplied weight vector, let $G_i=\sqrt{w_i}$, and
\[
p_i^\eps=p+\eps h_i+\eps^2B_i+o(\eps^2),
\]
where the remainder is in $\|\cdot\|_{p,\infty}$, the coefficients have zero mass, and $h_i/p$ and $B_i/p$ are bounded. The common special case $p_i^\eps=p+\eps h_i+o(\eps^2)$ is obtained by setting $B_i=0$. For each leaf, we set
$H_i=h_i$. For a block $A$ of leaves, we write $G_A=\sum_{i\in A}G_i$. If a node merges blocks $A$ and $B$, set
\[
\theta_{AB}=\frac{G_B}{G_A+G_B}
\,\,\,\,\text{and}\,\,\,\,\,
H_{AB}=\frac{G_AH_A+G_BH_B}{G_A+G_B}.
\]
For the second-order recursion, start with the leaf coefficients $B_i$ and let
\[
B_{AB}=(1-\theta_{AB})B_A+\theta_{AB}B_B+\tau(G_A^2,G_B^2;H_A/p,H_B/p)(H_B-H_A),
\]
whenever the merge is nondegenerate. The notation $AB$ denotes the ordered union of the adjacent leaf blocks.

An admissible tree set for the local chart is a nonempty subset $\mathcal U\subseteq\BT_n$ such that all local summaries generated by the recursion over trees in $\mathcal U$ have bounded $p$-ratios and every generated merge is nondegenerate. For a merge of blocks $A$ and $B$, nondegeneracy means
\[
J(A,B)=\int_\Omega\left(\frac{H_B-H_A}{p}\right)^2p\,d\mu>0.
\]
For $T\in\mathcal U$, the mass-carrying divergence-balancing protocol associates a state $(G_A,q_A^\eps)$ with every generated block $A$. At leaf $i$, the state is $(G_i,p_i^\eps)$. At a merge of adjacent blocks $A$ and $B$, the protocol sets

\[
G_{AB}=G_A+G_B
\quad\text{and}\quad
q_{AB}^\eps=(1-t_{AB}^\eps)q_A^\eps+t_{AB}^\eps q_B^\eps,
\]
where $t_{AB}^\eps$ is the distinguished local solution from Lemma~\ref{lem:root} of
\[
G_A^2D_f(q_A^\eps\|q_{AB}^\eps)
=G_B^2D_f(q_B^\eps\|q_{AB}^\eps).
\]
The root density is denoted by $\Fw{T}$. Thus, a merge is determined by both density components and carried masses. Admissibility and Lemma~\ref{lem:root} determine the local branch at every generated merge for sufficiently small $|\eps|$. A merge with $J(A,B)=0$ requires a separate higher-order analysis and is excluded from $\mathcal U$.

The next theorem is stated for a fixed number of leaves and an admissible tree set $\mathcal U$.

\begin{theorem}\label{thm:tree}
Every ordered tree $T\in\mathcal U$ satisfies
\[
\Fw{T}=p+\eps H+\eps^2B_T+o(\eps^2),
\]
in $\|\cdot\|_{p,\infty}$, with the remainder uniform over $T\in\mathcal U$,
where
\[
H=\frac{\sum_{i=1}^nG_ih_i}{\sum_{i=1}^nG_i}
\]
is independent of $T$, and $B_T$ is obtained from the stated recursion.
\end{theorem}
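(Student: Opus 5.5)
We argue by structural induction on the generated blocks of a fixed tree $T\in\mathcal U$. The inductive claim is that for every generated block $A$ the protocol state $(G_A,q_A^\eps)$ satisfies
\[
q_A^\eps=p+\eps H_A+\eps^2B_A+o(\eps^2)\quad\text{in }\|\cdot\|_{p,\infty},
\]
where $H_A$ and $B_A$ are the recursively defined coefficients and $G_A=\sum_{i\in A}G_i$. The coefficients $H_A$ and $B_A$ have zero mass and bounded $p$-ratios.

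\textbf{Base case.} At a leaf $i$ the state is $(G_i,p_i^\eps)$. The claim is exactly the standing hypothesis, with $H_i=h_i$ and $B_i$ given.

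\textbf{Inductive step.} Take a merge of adjacent blocks $A$ and $B$.

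1. By the induction hypothesis, the pair $r^\eps=q_A^\eps$, $s^\eps=q_B^\eps$ is a local chart in the sense of Section~\ref{sec:local}, with coefficients $h_A=H_A$, $h_B=H_B$, $B_A$ and $B_B$. Zero mass is preserved because every recursion step forms convex combinations plus multiples of $H_B-H_A$. Bounded $p$-ratios are part of admissibility.

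2. Admissibility gives $J(A,B)>0$, which is precisely the condition $J>0$ for this chart. Hence Lemma~\ref{lem:root} applies with endpoint weights $a=G_A^2$ and $b=G_B^2$.

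3. The root then satisfies
\[
t_0=\frac{\sqrt b}{\sqrt a+\sqrt b}=\frac{G_B}{G_A+G_B}=\theta_{AB}.
\]

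4. Theorem~\ref{thm:binary} now gives the first-order coefficient
\[
(1-\theta_{AB})H_A+\theta_{AB}H_B=\frac{G_AH_A+G_BH_B}{G_A+G_B}=H_{AB}.
\]
It gives the second-order coefficient as $(1-\theta_{AB})B_A+\theta_{AB}B_B+\tau(G_A^2,G_B^2;H_A/p,H_B/p)(H_B-H_A)=B_{AB}$, which is the stated recursion.

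5. The carried mass is $G_{AB}=G_A+G_B=\sum_{i\in A\cup B}G_i$.

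This closes the induction. At the root block $\{1,\ldots,n\}$, induct on the first-order recursion alone: $G_AH_A=\sum_{i\in A}G_ih_i$ is additive, so $H_A=\sum_{i\in A}G_ih_i/G_A$ for every block. This formula is independent of the tree shape, which gives the claimed $H$.

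\textbf{Uniformity over $\mathcal U$.} This is immediate once each tree has been handled. $\BT_n$ is finite, so $\mathcal U$ is finite and generates only finitely many merges. We take $\eps_0$ to be the minimum of the finitely many thresholds from Lemma~\ref{lem:root}. A finite maximum of functions that are $o(\eps^2)$ is still $o(\eps^2)$, so the remainder is uniform.

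\textbf{Main obstacle.} The delicate point is that Theorem~\ref{thm:binary} was proved for exact endpoint expansions with $o(\eps^2)$ remainders and fixed coefficients. Here the endpoints $q_A^\eps$ are themselves outputs of earlier merges, so their remainders are only known to be $o(\eps^2)$ in $\|\cdot\|_{p,\infty}$. This is, however, exactly the form of hypothesis the chart in Section~\ref{sec:local} allows.

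I would check two things explicitly. First, the proofs of Lemma~\ref{lem:root} and Theorem~\ref{thm:binary} use only the $\|\cdot\|_{p,\infty}$ remainder bound and the boundedness of $u_A$, $u_B$, $b_A$ and $b_B$. Second, the remainder of the fused density is controlled by
\[
(1-t_\eps)\,o(\eps^2)+t_\eps\,o(\eps^2)+(t_\eps-t_0-\eps\tau)\,\eps(h_B-h_A)+O(\eps^2)(t_\eps-t_0)\,\eps^{0}\cdot 0,
\]
which is $o(\eps^2)$ because $t_\eps-t_0-\eps\tau=o(\eps)$ and $(h_B-h_A)/p$ is bounded. With these two checks the induction propagates cleanly.
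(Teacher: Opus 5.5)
Your proposal is correct and follows essentially the same route as the paper. The paper also uses structural induction, applies Lemma~\ref{lem:root} and Theorem~\ref{thm:binary} at each merge with weights $G_A^2,G_B^2$ so that $t_0=G_B/(G_A+G_B)$, uses additivity of $G_AH_A$ to show $H$ is tree-independent, and relies on finiteness of $\mathcal U$ for the uniform remainder; your parent-remainder check corresponds to the paper's Appendix~B identity. One small slip: the last term of your displayed remainder is multiplied by zero, but it should be $\eps^2(t_\eps-t_0)(B_B-B_A)$, which is $O(\eps^3)$ in $\|\cdot\|_{p,\infty}$ and so does not affect your conclusion.
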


\begin{proof}[Proof.]
The proof is by induction over the tree. At each leaf, $H_i=h_i$ and $B_i$ is the second-order coefficient of $p_i^\eps$. Suppose an internal node merges already expanded children $A$ and $B$. Theorem~\ref{thm:binary} applies with endpoint weights $G_A^2$ and $G_B^2$. Since
\[
\frac{\sqrt{G_B^2}}{\sqrt{G_A^2}+\sqrt{G_B^2}}=\frac{G_B}{G_A+G_B},
\]
at every induction step, admissibility supplies the bounded-ratio and $J(A,B)>0$ hypotheses for the generated child states.
The first-order term is $H_{AB}=(G_AH_A+G_BH_B)/(G_A+G_B)$. Iterating the formula gives the root coefficient $H=(\sum_iG_ih_i)/(\sum_iG_i)$, independent of the tree. The second-order term is exactly the stated recursion, because Theorem~\ref{thm:binary} adds the local term $\tau(G_A^2,G_B^2;H_A/p,H_B/p)(H_B-H_A)$ at the merge. For fixed $n$, the set $\mathcal U$ is finite because $\mathcal U\subseteq\BT_n$, and every tree has $n-1$ internal nodes. Lemma~\ref{lem:chart} gives one neighborhood of $p$ on which all generated endpoint and segment ratios remain bounded after shrinking the neighborhood if necessary. Because $\mathcal U$ is finite and each tree has finitely many internal nodes, the finitely many merge-specific $o(\eps^2)$ remainders admit a common bound after shrinking the local neighborhood. Structural induction therefore gives a root remainder that is $o(\eps^2)$ in $\|\cdot\|_{p,\infty}$ uniformly over $T\in\mathcal U$.
\end{proof}

If $f'''(1)=0$, then $\tau=0$ at every admissible merge, so the second-order recursion is weighted averaging and $B_T$ is independent of the aggregation tree. Consequently, for every $T,T'\in\mathcal U$,
\[
\Fw{T}-\Fw{T'}=o(\eps^2)
\]
in $\|\cdot\|_{p,\infty}$, uniformly over the admissible tree set. The corresponding restricted aggregation-order variation is therefore $o(\eps^2)$ for the metric induced by $\|\cdot\|_{p,\infty}$, and likewise for any output metric locally bounded by a constant multiple of this norm on the generated chart. Any nonzero tree dependence must then arise at higher order. More generally, the non-averaging second-order contribution at an individual admissible merge vanishes whenever either $f'''(1)=0$ or the corresponding cubic contrast moment $L_3$ is zero.

For a local rotation with three adjacent blocks $A,B,C$, let $B^{(AB)C}$ and $B^{A(BC)}$ denote the second-order coefficients produced by the two bracketings. We write
\[
\Curv(A,B,C)=B^{(AB)C}-B^{A(BC)}.
\]

The notation below uses the hypotheses and recursion of Theorem~\ref{thm:tree}. Assume that the two bracketings under comparison belong to an admissible tree set.

For the next proposition, let $H_0$ denote a common first-order coefficient and let $U,V$ denote zero-mass second-order coefficients in the generated local chart. Assume that the output metric satisfies
\[d(p+\eps H_0+\eps^2U+o(\eps^2),p+\eps H_0+\eps^2V+o(\eps^2))
=\eps^2\|U-V\|+o(\eps^2)
\]
uniformly over that chart. For $d(q,q')=\|q-q'\|_1$, the coefficient norm is $\|\cdot\|_1$. For the probability total-variation distance $d_{\mathrm{TV}}(q,q')=\frac12\|q-q'\|_1$, it is $\frac12\|\cdot\|_1$. The dual norm over the full unit ball of $L^\infty(\mu)$ equals $\|\cdot\|_1$; a restricted test-function class generally induces only a functional seminorm.

\begin{proposition}\label{prop:rotation}
\[
\Fw{(AB)C}-\Fw{A(BC)}=\eps^2\Curv(A,B,C)+o(\eps^2),
\]
and
\[
d(\Fw{(AB)C},\Fw{A(BC)})=\eps^2\|\Curv(A,B,C)\|+o(\eps^2).
\]
\end{proposition}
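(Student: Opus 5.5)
The plan is to read both statements directly off Theorem~\ref{thm:tree}, applied to the two bracketings. The only extra information needed is that the first-order coefficient is common to both outputs.

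First, both bracketings belong to an admissible tree set $\mathcal U$, so Theorem~\ref{thm:tree} applies to each. It gives
\[
\Fw{(AB)C}=p+\eps H+\eps^2B^{(AB)C}+o(\eps^2)
\quad\text{and}\quad
\Fw{A(BC)}=p+\eps H+\eps^2B^{A(BC)}+o(\eps^2),
\]
in $\|\cdot\|_{p,\infty}$. The remainders are uniform over $\mathcal U$, and $H$ is independent of the tree.

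Here $B^{(AB)C}$ and $B^{A(BC)}$ are the coefficients produced by the recursion at the root of each bracketing. When $A,B,C$ are proper blocks sitting inside a larger tree, I would interpret the local rotation as acting on the subtree spanned by $A\cup B\cup C$. In that case Theorem~\ref{thm:tree} is applied to this subtree, whose leaves carry the expanded block states $(G_A,q_A^\eps)$, $(G_B,q_B^\eps)$, $(G_C,q_C^\eps)$. The same inductive argument as in Theorem~\ref{thm:tree} covers this, because the block states already have expansions of the required form with bounded $p$-ratios.

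The first-order coefficient agrees for the two bracketings: both reduce to $(G_AH_A+G_BH_B+G_CH_C)/(G_A+G_B+G_C)$. This is exactly the additivity $G_{AB}=G_A+G_B$ under the square-root transformation.

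Subtracting the two expansions, the $p$ and $\eps H$ terms cancel. The two $o(\eps^2)$ remainders combine into one $o(\eps^2)$ in $\|\cdot\|_{p,\infty}$, which proves the first display by the definition of $\Curv(A,B,C)$.

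For the second display, I would apply the assumed metric expansion with $H_0=H$, $U=B^{(AB)C}$ and $V=B^{A(BC)}$. Both $U$ and $V$ have zero mass, since each step of the recursion combines zero-mass terms $B_A$, $B_B$ and $H_B-H_A$. They also have bounded $p$-ratios by admissibility, so the outputs lie in the generated chart where the uniform metric hypothesis holds. This gives $\eps^2\|U-V\|+o(\eps^2)=\eps^2\|\Curv(A,B,C)\|+o(\eps^2)$.

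The main obstacle is bookkeeping rather than analysis. One must check that the remainders in the two expansions are genuinely of the form required by the metric hypothesis, namely $o(\eps^2)$ in a norm compatible with the chart. For the $L^1$ and total-variation cases I would note that $\|g\|_1\le\|g\|_{p,\infty}\int p\,d\mu=\|g\|_{p,\infty}$. An $o(\eps^2)$ remainder in $\|\cdot\|_{p,\infty}$ is therefore $o(\eps^2)$ in $L^1$, and the triangle inequality yields $\bigl|\|\Fw{(AB)C}-\Fw{A(BC)}\|_1-\eps^2\|\Curv\|_1\bigr|=o(\eps^2)$ directly.
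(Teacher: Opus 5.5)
Your proposal is correct and follows the paper's proof. You apply Theorem~\ref{thm:tree} to both bracketings, cancel the common $p$ and first-order term $(G_AH_A+G_BH_B+G_CH_C)/(G_A+G_B+G_C)$, and invoke the assumed metric expansion with $U=B^{(AB)C}$ and $V=B^{A(BC)}$. Your extra checks are sound bookkeeping that the paper leaves implicit: the zero mass and bounded $p$-ratios of $U$ and $V$, the subtree reading for interior rotations, and the bound $\|g\|_1\le\|g\|_{p,\infty}$ for the $L^1$ and total-variation cases.
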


\begin{proof}[Proof.]
Theorem~\ref{thm:tree} gives the same zeroth-order density $p$ and the same first-order term
\[
\frac{G_AH_A+G_BH_B+G_CH_C}{G_A+G_B+G_C}
\]
for both bracketings. Subtracting the two expansions cancels those two common terms and leaves the second-order difference. The metric statement follows by applying the stated local metric expansion with $U=B^{(AB)C}$ and $V=B^{A(BC)}$.
\end{proof}

Section~\ref{sec:normalform} gives the corrected local chart. It carries the same transformed mass and first-order coefficient, while replacing the generated second-order coefficient by weighted averaging of zero-mass second-order density coefficients.

\section{Posterior Contrasts}\label{sec:consequences}

We next show that the second-order term has an observable statistical consequence. A finite three-point posterior-fusion calculation gives a signed second-order density coefficient, and pairing it with a posterior contrast gives a nonzero quantity-of-interest discrepancy.

Let $\Omega_0=\{1,2,3\}$ with counting measure and reference density $p=(1/3,1/3,1/3)$. Let $\wv=(1,1,1)$, and for a scalar $\alpha\ne0$ let
\[
h_1=\alpha(2,-1,-1),\quad
h_2=\alpha(-1,2,-1) \,\,\,\,\text{and}\,\,\,\,\,
h_3=\alpha(-1,-1,2).
\]
The scalar $\alpha$ is fixed while $\eps$ tends to zero. Positivity holds under the conservative sufficient bound $|\eps|<1/(6|\alpha|)$. Hence, the local densities $p_i^\eps=p+\eps h_i$ are positive and sum to one for sufficiently small $|\eps|$. The example uses the special leaf convention $B_i=0$. In the following theorem, $f$ satisfies the assumptions of Theorem~\ref{thm:binary} and $f'''(1)\ne0$.

\begin{theorem}\label{thm:witness}
For the three finite-state inputs above, square-root transformed-weight divergence balancing satisfies
\[
\Fw{((12)3)}-\Fw{1(23)}
=-\eps^2\frac{3\alpha^2}{4}\frac{f'''(1)}{f''(1)}(1,0,-1)+o(\eps^2).
\]
\end{theorem}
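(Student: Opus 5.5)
The plan is to apply Theorem~\ref{thm:tree} to the two trees $((12)3)$ and $(1(23))$. Both lie in an admissible set, which we will check merge by merge. By Proposition~\ref{prop:rotation}, the difference of the two root densities is then $\eps^2\{B^{((12)3)}-B^{(1(23))}\}+o(\eps^2)$. So the whole task is to evaluate the second-order recursion explicitly.

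Throughout, $G_i=\sqrt{w_i}=1$ and $B_i=0$. I write $\kappa=f'''(1)/f''(1)$, so that $A_3/A_2=\kappa/3$. On $\Omega_0$ with $p=1/3$, every integral $\int g\,p\,d\mu$ is one third of the coordinate sum. This makes each $J$ and $L_3$ a finite arithmetic computation.

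\textbf{First bracketing.} The merge $1\op2$ uses weights $1,1$, so $t_0=1/2$. Here $\Delta=u_2-u_1=9\alpha(-1,1,0)$, which gives $J=54\alpha^2>0$ and $L_3=0$. The cube of $\Delta$ has coordinate sum zero by the symmetry $1\leftrightarrow2$. Hence $\tau=0$, $B_{12}=0$, and $H_{12}=\alpha(\tfrac12,\tfrac12,-1)$.

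The merge $(12)\op3$ carries $G_{12}=2$, $G_3=1$, so it uses weights $a=4$, $b=1$ and $t_0=1/3$. Now $\Delta=(h_3-H_{12})/p=\tfrac{9\alpha}{2}(-1,-1,2)$. This gives $J=81\alpha^2/2>0$ and $L_3=729\alpha^3/4$. For the weight factors, $at_0^3+b(1-t_0)^3=4/9$ and $at_0+b(1-t_0)=2$. Substituting into the formula for $\tau$ in Theorem~\ref{thm:binary} gives $\tau=\alpha\kappa/6$. Hence
\[
B^{((12)3)}=\tfrac{\alpha\kappa}{6}\,\alpha(-\tfrac32,-\tfrac32,3)=\alpha^2\kappa(-\tfrac14,-\tfrac14,\tfrac12).
\]

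\textbf{Second bracketing.} This is the mirror computation. The merge $2\op3$ again has $L_3=0$, so $\tau=0$, $B_{23}=0$, and $H_{23}=\alpha(-1,\tfrac12,\tfrac12)$.

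The merge $1\op(23)$ uses weights $a=1$, $b=4$ and $t_0=2/3$. Here $\Delta=\tfrac{9\alpha}{2}(-2,1,1)$, which gives the same $J$ and $L_3=-729\alpha^3/4$. The weight factors are again $4/9$ and $2$. Therefore $\tau=-\alpha\kappa/6$, and
\[
B^{(1(23))}=-\tfrac{\alpha\kappa}{6}\,\alpha(-3,\tfrac32,\tfrac32)=\alpha^2\kappa(\tfrac12,-\tfrac14,-\tfrac14).
\]
Subtracting the two coefficients gives $\alpha^2\kappa(-\tfrac34,0,\tfrac34)=-\tfrac{3\alpha^2}{4}\kappa(1,0,-1)$, which is the claimed formula.

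\textbf{Checking admissibility.} Before the computation can be trusted, I must verify the hypotheses of Theorem~\ref{thm:tree}. All four generated merges have $J>0$ whenever $\alpha\ne0$, as computed above. Every generated coefficient is a bounded vector on a three-point space, so the bounded $p$-ratio conditions hold. Positivity for small $|\eps|$ was noted before the statement.

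\textbf{Main obstacle.} The step needing the most care is the sign bookkeeping in $\tau$. The orientation of $\Delta=u_B-u_A$ and the roles of $a$ and $b$ must follow the ordered convention in which the left block is $A$ and $t$ moves toward the right block. Swapping these conventions flips the sign of the answer. I will therefore recheck the second bracketing against the first using the reflection $1\leftrightarrow3$. This reflection maps $((12)3)$ to $(1(23))$ with coordinates reversed, and it should send $B^{((12)3)}$ to the reversal of $B^{(1(23))}$. Both computed coefficients are consistent with this check.
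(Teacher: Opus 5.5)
Your proposal is correct and follows essentially the same route as the paper: apply the tree recursion, note that $L_3=0$ at both lower merges, compute $\tau=\pm\kappa\alpha/6$ at the upper merges (the paper's $\pm k\alpha/2$ with $k=\kappa/3$), and subtract using Proposition~\ref{prop:rotation}. Two things you add are not spelled out in the paper's proof: the explicit admissibility check ($J>0$ at all four merges) and the $1\leftrightarrow3$ reflection sanity check.
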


\begin{proof}[Proof.]
Let $A_2=f''(1)/2$, $A_3=f'''(1)/6$, and $k=A_3/A_2=f'''(1)/(3f''(1))$. At the lower merges,
\[\frac{h_2-h_1}{p}=9\alpha(-1,1,0)
\quad\text{and}\quad
\frac{h_3-h_2}{p}=9\alpha(0,-1,1),\]
and therefore
\[L_3(1,2)=\frac13\{(-9\alpha)^3+(9\alpha)^3+0^3\}=0\quad\text{and}\quad
L_3(2,3)=\frac13\{0^3+(-9\alpha)^3+(9\alpha)^3\}=0.\]
Thus, $\tau_{1,2}=\tau_{2,3}=0$, so both lower second-order contributions vanish.

For the upper merge in $((12)3)$, one has $G_{12}=2$, $G_3=1$, and $t_0=1/3$. The first-order lower summary is $H_{12}=(h_1+h_2)/2$. Let $\tau_{(12),3}=\tau(G_{12}^2,G_3^2;H_{12}/p,H_3/p)$. Substitution in Theorem~\ref{thm:binary} gives $\tau_{(12),3}=k\alpha/2$, and thus
\[B^{(12)3}=\tau_{(12),3}(H_3-H_{12})
=k\alpha^2\left(-\frac34,-\frac34,\frac32\right).
\]
For the upper merge in $1(23)$, let $\tau_{1,(23)}=\tau(G_1^2,G_{23}^2;H_1/p,H_{23}/p)$. The same calculation gives $\tau_{1,(23)}=-k\alpha/2$ and
\[
B^{1(23)}=k\alpha^2\left(\frac32,-\frac34,-\frac34\right).
\]
Therefore,
\[
B^{(12)3}-B^{1(23)}
=-\frac{9k\alpha^2}{4}(1,0,-1)
=-\frac{3\alpha^2}{4}\frac{f'''(1)}{f''(1)}(1,0,-1).
\]
Proposition~\ref{prop:rotation} gives the stated expansion.
\end{proof}

For the centered Kullback--Leibler generator $f(u)=u\log u-u+1$, $f''(1)=1$ and $f'''(1)=-1$. Theorem~\ref{thm:witness} gives
\[
\Fw{((12)3)}-\Fw{1(23)}
=\eps^2\frac{3\alpha^2}{4}(1,0,-1)+o(\eps^2).
\]
The posterior contrast $\varphi=(1,0,-1)$ satisfies
\[
\int \varphi \left\{ \Fw{((12)3)}-\Fw{1(23)}\right\}\,d\mu
=\eps^2\frac{3\alpha^2}{2}+o(\eps^2).
\]
Thus, the parenthesization changes a reported contrast at order $\eps^2$ while the three local posteriors, supplied weights, divergence generator, and mass-carrying protocol remain fixed.

Consider two aggregation trees with $\Fw{T}-\Fw{T'}=\eps^2\Xi+o(\eps^2)$
in $L^1(\mu)$, where $\int \Xi\,d\mu=0$. For the next proposition, $\varphi$ is a bounded test function satisfying $\int\varphi\Xi\,d\mu\ne0$.

\begin{proposition}\label{prop:qoi}
For every bounded test function $\varphi$ satisfying $\int\varphi\Xi\,d\mu\ne 0$,
\[
\left|\int\varphi(\Fw{T}-\Fw{T'})\,d\mu\right|
=\eps^2\left|\int\varphi\Xi\,d\mu\right|+o(\eps^2).
\]
For the unit ball of $L^\infty(\mu)$, the optimal leading coefficient is $\|\Xi\|_1$.
\end{proposition}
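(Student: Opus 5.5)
The plan is to integrate the $L^1$ expansion against $\varphi$ and then treat the optimality claim by duality. Write the hypothesis as $\Fw{T}-\Fw{T'}=\eps^2\Xi+R_\eps$ with $\|R_\eps\|_1=o(\eps^2)$. For a bounded $\varphi$, Hölder's inequality gives
\[
\left|\int\varphi R_\eps\,d\mu\right|\le\|\varphi\|_\infty\|R_\eps\|_1=o(\eps^2),
\]
so that
\[
\int\varphi(\Fw{T}-\Fw{T'})\,d\mu=\eps^2\int\varphi\Xi\,d\mu+o(\eps^2).
\]
We then pass to absolute values using the reverse triangle inequality $\bigl||x+y|-|x|\bigr|\le|y|$, with $x=\eps^2\int\varphi\Xi\,d\mu$ and $y=\int\varphi R_\eps\,d\mu$. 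This proves the first display. The hypothesis $\int\varphi\Xi\,d\mu\ne0$ is not needed for the identity itself. It only ensures that the leading term is genuinely of exact order $\eps^2$, so the contrast detects the discrepancy.

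For the optimality statement, we take the supremum over $\|\varphi\|_\infty\le1$. The remainder bound $\|\varphi\|_\infty\|R_\eps\|_1\le\|R_\eps\|_1$ is uniform over this ball. Hence
\[
\sup_{\|\varphi\|_\infty\le1}\left|\int\varphi(\Fw{T}-\Fw{T'})\,d\mu\right|=\eps^2\sup_{\|\varphi\|_\infty\le1}\left|\int\varphi\Xi\,d\mu\right|+o(\eps^2).
\]
It remains to evaluate the dual norm, which equals $\|\Xi\|_1$ by the identity stated in Section~\ref{sec:aggregation}. The upper bound follows from Hölder's inequality. The lower bound is attained by $\varphi=\operatorname{sign}(\Xi)$, which is measurable and bounded by $1$. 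The optimal leading coefficient is therefore $\|\Xi\|_1$, and no bounded test function with $\|\varphi\|_\infty\le1$ achieves a larger coefficient.

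The only delicate point is uniformity of the remainder when passing to the supremum over the unit ball, and the Hölder bound handles it directly. Two further remarks are worth recording after the proof. First, $\Xi\in L^1$ because it is the coefficient of an $L^1$ expansion. Second, since $\int\Xi\,d\mu=0$, one may replace $\varphi$ by $\varphi-c$ for any constant $c$ without changing the leading term. If one restricts to $0\le\varphi\le1$ (events), this yields the coefficient $\frac12\|\Xi\|_1$, which is consistent with the $d_{\mathrm{TV}}$ convention stated earlier.
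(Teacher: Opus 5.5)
Your proposal is correct and follows essentially the same route as the paper: Hölder's inequality absorbs the $L^1$ remainder, and the $L^1$--$L^\infty$ duality, with $\operatorname{sgn}(\Xi)$ attaining the supremum, identifies the optimal coefficient $\|\Xi\|_1$. Your reverse-triangle step for passing to absolute values is a slightly cleaner substitute for the paper's fixed-sign argument, and as you note it makes the hypothesis $\int\varphi\Xi\,d\mu\neq0$ unnecessary for the identity itself.
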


\begin{proof}[Proof.]
The first claim follows by pairing the $L^1$ expansion with the bounded function $\varphi$. For the unit ball of $L^\infty(\mu)$,
\[
\sup_{\|\varphi\|_\infty\le1}\left|\int\varphi\Xi\,d\mu\right|=\|\Xi\|_1,
\]
which is the usual duality identity between $L^1$ and $L^\infty$.
\end{proof}

For finite parameter spaces, coordinate indicators in $\QoI$ report posterior-mass variation over states. For bounded loss functions, the protocol-specific functional diameter defined in Section~\ref{sec:bounds} reports the largest variation in posterior expected loss over the selected loss class. When $\QoI$ is the unit ball of $L^\infty(\mu)$, Proposition~\ref{prop:qoi} identifies the leading coefficient with the $L^1$ coefficient of the density difference.

The finite-state construction in
Theorem~\ref{thm:witness} gives the posterior-contrast discrepancy. The following example illustrates the same second-order mechanism
in a dominated continuous model.
\begin{example}\label{ex:normal}
Let $p$ be the standard normal density on $\mathbb R$, and let $u_i=h_i/p$ be bounded smooth score perturbations with zero $p$-mean. If the perturbations are Hermite-type score perturbations multiplied by a smooth compactly supported cutoff, the assumptions of Section~\ref{sec:local} hold after renormalization. Exact renormalization may introduce second-order zero-mass coefficients, and the resulting $B_i$ should be carried in the general local chart. For equal weights and admissible nondegenerate merges, the first-order aggregate is the arithmetic mean of the score directions. Under the square-root transformed-weight balancing rule, tree dependence in a bounded posterior functional $\int \varphi q\,d\mu$ can first occur at order $\eps^2$ and is obtained by pairing $\varphi$ with the coefficient $B_T-B_{T'}$.
\end{example}

\section{Corrected Local Charts}\label{sec:normalform}

We now state a local normalization principle. It preserves the first-order statistical aggregate and removes the non-averaging second-order term by carrying the local chart coefficient together with the density approximation.

A carried coefficient state is a triple $(G,H,B)$, where $G>0$ is the transformed weight, $H$ is a zero-mass first-order density coefficient, and $B$ is a zero-mass second-order density coefficient. The corrected weighted-average update below is algebraically defined on such states. For the interpretation as a local density-chart state, we additionally require $H/p,B/p\in L^\infty(\mu)$; then $p+\eps H+\eps^2B$ is a probability density for all sufficiently small $|\eps|$. At a leaf $i$, the general state is $(G_i,h_i,B_i)$. The common special convention for purely first-order leaf perturbations is $B_i=0$. For density-chart states satisfying these bounded-ratio conditions and $J(A,B)>0$, the uncorrected merge sends two states to
\[
G_{AB}=G_A+G_B,
\quad
H_{AB}=\frac{G_AH_A+G_BH_B}{G_A+G_B}\quad\text{and}
\]
\[
B_{AB}=\frac{G_AB_A+G_BB_B}{G_A+G_B}+\tau(G_A^2,G_B^2;H_A/p,H_B/p)(H_B-H_A).
\]
The corrected merge keeps the same $G_{AB}$ and $H_{AB}$ and replaces the last term by
\[
B_{AB}^{0}=\frac{G_AB_A+G_BB_B}{G_A+G_B}.
\]
This weighted-average update is algebraically defined even when $J(A,B)=0$. Its interpretation as subtraction of the $\tau$-generated non-averaging second-order term of the original $f$-divergence balancing rule is justified here only for nondegenerate merges with $J(A,B)>0$.

\begin{theorem}\label{thm:normalform}
For any ordered tree and any leaf coefficient states $(G_i,h_i,B_i)$, $i=1,\ldots,n$, the corrected merge returns the root state
\[
\left(\sum_iG_i,\ \frac{\sum_iG_ih_i}{\sum_iG_i},\ \frac{\sum_iG_iB_i}{\sum_iG_i}\right).
\]
The corrected second-order coefficient state is independent of the aggregation tree.
\end{theorem}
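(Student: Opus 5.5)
The plan is structural induction on the ordered tree, with the induction hypothesis strengthened to apply to every subtree: for a block $A$ of consecutive leaves, the corrected merge returns
\[
\Bigl(G_A,\ \frac{\sum_{i\in A}G_ih_i}{G_A},\ \frac{\sum_{i\in A}G_iB_i}{G_A}\Bigr),\qquad G_A=\sum_{i\in A}G_i .
\]
The theorem is the special case in which $A$ is the root block $\{1,\ldots,n\}$.

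For the base case, a leaf $i$ carries the state $(G_i,h_i,B_i)$, which matches the claim since the block is $\{i\}$. For the inductive step, let an internal node merge adjacent blocks $A$ and $B$ whose states already have the claimed form. The corrected update sets $G_{AB}=G_A+G_B=\sum_{i\in A\cup B}G_i$. For the first-order coordinate it gives
\[
H_{AB}=\frac{G_AH_A+G_BH_B}{G_A+G_B}
=\frac{\sum_{i\in A}G_ih_i+\sum_{i\in B}G_ih_i}{G_{AB}},
\]
because $G_AH_A=\sum_{i\in A}G_ih_i$. The second-order coordinate follows by the identical computation from $B^0_{AB}=(G_AB_A+G_BB_B)/(G_A+G_B)$.

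The underlying reason is that each coordinate is a ``mass-weighted mean.'' Each state carries an additive total mass $G$ and a mass-weighted sum $G\cdot H$ (respectively $G\cdot B$), and the corrected merge simply adds both. Addition is associative and commutative, so the result cannot depend on the bracketing. The leaf order also does not matter, although this is not needed because the ordered union keeps the leaves in order anyway.

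Two side checks are needed. First, zero mass is preserved, since each update is a convex combination of zero-mass functions. Second, when $H_i/p$ and $B_i/p$ are bounded, the ratios of the intermediate and root coefficients are also bounded, because convex combinations satisfy $\|\cdot\|_{p,\infty}\le\max_i\|\cdot\|_{p,\infty}$. Thus every generated state is again a valid carried coefficient state, with no condition of the form $J(A,B)>0$ required. This matters because the theorem is stated for any ordered tree, so it must not rely on admissibility or on Theorem~\ref{thm:binary}.

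Tree independence of the second-order coefficient then follows at once: the root formula involves only the leaf data $(G_i,B_i)$ and no features of the tree. There is no genuine obstacle in this argument. The only care needed is to phrase the induction over subtrees (blocks) rather than only over the root, so that the hypothesis is available at both children of each merge.
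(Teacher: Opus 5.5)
Your proof is correct and follows essentially the same route as the paper: a structural induction over subtrees (the paper uses child leaf sets $I$ and $J$) in which substituting the child formulas into the corrected weighted average gives the mass-weighted means at the parent. Your side checks on zero mass and bounded $p$-ratios match the paper's remark that the corrected recursion is a pure weighted average, so it needs no nondegeneracy condition $J(A,B)>0$.
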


\begin{proof}[Proof.]
The corrected merge is weighted averaging in the additive weights $G$. Weighted averaging is associative when the weights are carried with the values. Induction over the tree gives the stated root state. The expression depends only on the ordered leaves and their carried states.
\end{proof}

The $\tau$-generated term is the only non-averaging contribution to the binary second-order update; its elementary coboundary produces the second-order associativity defect. Removing the term gives the weighted-average root state in the carried coordinates. The correction is local and uses the expansion around $p$ and the first-order perturbation ratios. Finite-perturbation claims require separate estimates for the concrete fusion rule. For a corrected root state $(G_T,H_T,B_T^0)$ satisfying $H_T/p,B_T^0/p\in L^\infty(\mu)$, the expression
\[
p+\eps H_T+\eps^2B_T^0
\]
is the corresponding second-order density representation; it does not define a finite-$\eps$ corrected binary fusion operator.

\begin{remark}
If a concrete finite-$\eps$ corrected operator is supplied separately and, at every generated merge, admits an expansion
\[
\mathcal M_\eps^0(A,B)=p+\eps H_{AB}+\eps^2B_{AB}^0+R_\eps(A,B)\,\,\,\,\text{and}\,\,\,
\|R_\eps(A,B)\|\le C|\eps|^3,
\]
uniformly over the relevant corrected local chart, then, for fixed $n$, the remainder propagates through every ordered tree whenever weighted averaging is nonexpansive in that norm. Consequently, any two such realized tree outputs differ by $O(|\eps|^3)$. No finite-$\eps$ corrected fusion operator is constructed here.
\end{remark}

\section{Stability and Bounds}\label{sec:bounds}

We now translate the local second-order coefficients into statistical diameter statements. The first-order term is common to all admissible trees, while the second-order coefficients determine the leading density-level and quantity-of-interest diameters.

Let $\mathcal U\subseteq\BT_n$ be an admissible tree set in the sense of Theorem~\ref{thm:tree}. For the fixed divergence generator $f$, supplied weight vector $\wv$, and local input family, we define the protocol-specific density diameter by
\[
\AODfw^{\mathcal U}(p_1^\eps,\ldots,p_n^\eps)
=\sup_{T,T'\in\mathcal U}d(\Fw{T},\Fw{T'}).
\]
For a quantity-of-interest class $\QoI$ whose relevant integrals are finite, the protocol-specific functional diameter is
\[
\AODfwQ^{\mathcal U}(p_1^\eps,\ldots,p_n^\eps)
=\sup_{\varphi\in\QoI}\sup_{T,T'\in\mathcal U}
\left|\int_\Omega\varphi\{\Fw{T}-\Fw{T'}\}\,d\mu\right|.
\]
These quantities are the counterparts of Definition~\ref{def:puq} for the mass-carrying protocol. When $\mathcal U=\BT_n$ is admissible, we omit the superscript $\mathcal U$.

Let
\[
\mathcal S_{\mathcal U}=\{B_T \mid T\in\mathcal U\}
\]
be the finite set of root second-order coefficients generated by the recursion in Theorem~\ref{thm:tree}. For a norm $\|\cdot\|$ on the local chart, the second-order coefficient diameter is
\[
\Delta_2^{\mathcal U}=\sup_{B,B'\in\mathcal S_{\mathcal U}}\|B-B'\|.
\]
For a uniformly bounded class $\QoI$ of measurable functions, meaning $\sup_{\varphi\in\QoI}\|\varphi\|_\infty<\infty$, the functional coefficient diameter is
\[
\Delta_{2,\QoI}^{\mathcal U}=\sup_{\varphi\in\QoI}\sup_{B,B'\in\mathcal S_{\mathcal U}}
\left|\int_\Omega \varphi(B-B')\,d\mu\right|.
\]
Since $\mathcal U$ is finite, $\Delta_2^{\mathcal U}$ is finite. The uniform $L^\infty(\mu)$ bound on $\QoI$ ensures that $\Delta_{2,\QoI}^{\mathcal U}$ is also finite.

For the next proposition, the output distance is generated by the norm used in $\Delta_2^{\mathcal U}$, and the tree expansions hold in that norm uniformly over $T\in\mathcal U$. The quantity-of-interest class $\QoI$ is uniformly bounded, and the tree expansions hold in $L^1(\mu)$.

\begin{proposition}\label{prop:diameter-asymptotic}
\[
\AODfw^{\mathcal U}(p_1^\eps,\ldots,p_n^\eps)=\eps^2\Delta_2^{\mathcal U}+o(\eps^2),
\]
and
\[
\AODfwQ^{\mathcal U}(p_1^\eps,\ldots,p_n^\eps)=\eps^2\Delta_{2,\QoI}^{\mathcal U}+o(\eps^2).
\]
\end{proposition}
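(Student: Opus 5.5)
The plan is to combine the uniform tree expansion of Theorem~\ref{thm:tree} with the finiteness of $\mathcal U$ and then use the reverse triangle inequality. By Theorem~\ref{thm:tree}, every $T\in\mathcal U$ satisfies $\Fw{T}=p+\eps H+\eps^2B_T+R_T^\eps$, where $H$ does not depend on $T$. By hypothesis the remainders satisfy $\sup_{T\in\mathcal U}\|R_T^\eps\|=o(\eps^2)$, both in the norm used for $\Delta_2^{\mathcal U}$ and in $L^1(\mu)$. Subtracting the expansions for two trees cancels $p$ and the common first-order term $\eps H$. This leaves
\[
\Fw{T}-\Fw{T'}=\eps^2(B_T-B_{T'})+R_T^\eps-R_{T'}^\eps ,
\]
and the error term has norm at most $2\sup_{S\in\mathcal U}\|R_S^\eps\|=o(\eps^2)$, uniformly in the pair $(T,T')$.

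For the density diameter, the output distance is generated by the norm, so $d(\Fw{T},\Fw{T'})=\|\Fw{T}-\Fw{T'}\|$. The reverse triangle inequality then gives
\[
\bigl|\,d(\Fw{T},\Fw{T'})-\eps^2\|B_T-B_{T'}\|\,\bigr|\le 2\sup_{S\in\mathcal U}\|R_S^\eps\| .
\]
This bound is uniform over pairs, so taking the supremum over $T,T'\in\mathcal U$ on both sides gives the first claim. Here I use that a supremum of quantities each within $\delta$ of a target differs from the supremum of the targets by at most $\delta$. Since $\mathcal U$ is finite, both suprema are maxima and $\Delta_2^{\mathcal U}$ is attained.

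For the functional diameter, let $M=\sup_{\varphi\in\QoI}\|\varphi\|_\infty<\infty$. For each $\varphi\in\QoI$ and each pair $(T,T')$, pairing with $\varphi$ gives
\[
\left|\int\varphi\{\Fw{T}-\Fw{T'}\}\,d\mu-\eps^2\int\varphi(B_T-B_{T'})\,d\mu\right|\le 2M\sup_{S\in\mathcal U}\|R_S^\eps\|_1 ,
\]
and the right-hand side is $o(\eps^2)$ uniformly in $\varphi$ and in the pair. Taking absolute values and then the double supremum, the same sup-perturbation argument yields $\eps^2\Delta_{2,\QoI}^{\mathcal U}+o(\eps^2)$.

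The main obstacle is uniformity. The supremum over $\QoI$ may be over an infinite class, and the supremum over pairs must not accumulate pair-dependent remainders. The argument above handles the pairs through the finiteness of $\mathcal U$ and the uniform remainder in Theorem~\ref{thm:tree}. It handles $\QoI$ through the uniform $L^\infty$ bound $M$. I will state these two points explicitly so that the leading coefficient is the full diameter and not merely a bound on it.
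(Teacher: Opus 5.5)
Your proposal is correct and follows essentially the same route as the paper. Both arguments use the uniform $o(\eps^2)$ remainder from Theorem~\ref{thm:tree} over the finite set $\mathcal U$, cancellation of the common term $p+\eps H$, the reverse triangle inequality with the bound $2\sup_{S\in\mathcal U}\|r_S^\eps\|$, and H\"older's inequality with $M=\sup_{\varphi\in\QoI}\|\varphi\|_\infty$. The only difference is cosmetic: the paper packages the $\QoI$ step through the seminorm $\|g\|_{\QoI}=\sup_{\varphi\in\QoI}|\int\varphi g\,d\mu|$ and its reverse triangle inequality, which is equivalent to your per-$\varphi$ sup-perturbation argument.
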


\begin{proof}[Proof.]
Theorem~\ref{thm:tree} gives $\Fw{T}-\Fw{T'}=\eps^2(B_T-B_{T'})+o(\eps^2)$ for every pair $T,T'\in\mathcal U$. Since $\mathcal U$ is finite, the remainder is uniform over all tree pairs. Taking suprema gives the density-level formula. Pairing with uniformly bounded test functions gives the quantity-of-interest formula. Appendix~\ref{app:bounds} gives the reverse-triangle derivation.
\end{proof}

The next result expresses the coefficient diameter through elementary rotations. It is the second-order counterpart of Proposition~\ref{prop:assoc}. For a rotation $\rho$ along a tree path, let $\Curv_\rho$ be the local coefficient from Proposition~\ref{prop:rotation} and let $L_\rho$ be the Lipschitz modulus of the surrounding first-order averaging context on second-order coefficients. In the square-root chart, $L_\rho\le1$ for the $L^1$ norm, the signed-measure total variation norm, the coefficient norm $\frac12\|\cdot\|_1$ associated with probability total-variation distance, and the full $L^\infty$-unit-ball dual norm, because the surrounding context is convex averaging.

\begin{theorem}\label{thm:rotation-bound}
For any two trees $T,T'\in\mathcal U$ and any rotation path $\gamma:T\leadsto T'$ whose intermediate trees all belong to $\mathcal U$,
\[
\|B_T-B_{T'}\|\le \sum_{\rho\in\gamma}L_\rho\|\Curv_\rho\|.
\]
The coefficient diameter satisfies
\[
\Delta_2^{\mathcal U}\le \sup_{T,T'\in\mathcal U}\inf_{\gamma}\sum_{\rho\in\gamma}L_\rho\|\Curv_\rho\|,
\]
where the infimum is taken over rotation paths from $T$ to $T'$ whose intermediate trees all belong to $\mathcal U$, with the convention that the infimum over an empty path set is $+\infty$.
\end{theorem}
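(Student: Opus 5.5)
The plan is to mirror the proof of Proposition~\ref{prop:assoc}, working at the level of root second-order coefficients produced by the recursion of Theorem~\ref{thm:tree}. Fix $T,T'\in\mathcal U$ and a rotation path $\gamma=(T_0,\ldots,T_m)$ with every $T_j\in\mathcal U$. Because every intermediate tree is admissible, each $B_{T_j}$ is defined by the recursion. The triangle inequality then gives
\[
\|B_T-B_{T'}\|\le\sum_{j=1}^m\|B_{T_{j-1}}-B_{T_j}\|,
\]
so it suffices to bound a single elementary rotation $\rho$ by $L_\rho\|\Curv_\rho\|$.

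For one rotation, $T_{j-1}$ and $T_j$ agree outside a local subtree on three adjacent blocks $A,B,C$. One of them contains $(AB)C$ and the other $A(BC)$, inside a common context. The key observation is that the first-order recursion is weighted averaging in the transformed masses. Both bracketings therefore produce the same carried pair $(G_{ABC},H_{ABC})$ at the root of the local subtree, namely $G_A+G_B+G_C$ and the $G$-weighted mean of $H_A,H_B,H_C$.

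Consequently, at every ancestor merge in the surrounding context, the quantities $\theta$, $H$, $J$, and the $\tau$-coefficient $\tau(G_X^2,G_Y^2;H_X/p,H_Y/p)(H_Y-H_X)$ coincide for the two trees. These quantities depend only on first-order data and masses, and the sibling blocks are untouched by the rotation. Hence the two root coefficients differ only through the propagation of the local subtree coefficient. At each ancestor merge, the second-order update is affine in the incoming coefficient with linear part multiplication by $1-\theta$ or $\theta$, with identical additive terms for both trees. By induction up the context,
\[
B_{T_{j-1}}-B_{T_j}=\Big(\prod_{\text{ancestors}}\lambda_k\Big)\,\Curv(A,B,C),\qquad \lambda_k\in\{\theta_k,1-\theta_k\}\subset(0,1),
\]
where $\Curv(A,B,C)=B^{(AB)C}-B^{A(BC)}$ is the local coefficient of Proposition~\ref{prop:rotation}. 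This identifies the surrounding context map on second-order coefficients as an affine map whose linear part is a scalar contraction. Its Lipschitz modulus $L_\rho$ then gives $\|B_{T_{j-1}}-B_{T_j}\|\le L_\rho\|\Curv_\rho\|$, with $L_\rho\le1$ for the norms listed before the theorem. The orientation sign is absorbed by the norm.

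Summing over $j$ proves the first inequality. For the diameter bound, I take the infimum over admissible paths $\gamma:T\leadsto T'$ and then the supremum over pairs in $\mathcal U$. The convention $\inf\emptyset=+\infty$ makes the bound trivially valid when no admissible path exists.

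The main obstacle is the middle step: justifying that the rotation does not alter the first-order data or the $\tau$-terms anywhere above the local subtree. Without this, the context would not act affinely with the same translation on both coefficients, and ancestor $\tau$'s could contribute extra differences. I would prove it carefully by structural induction on the depth of the context, using only the associativity of $G$-weighted averaging established in Theorem~\ref{thm:tree}.
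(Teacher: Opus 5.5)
Your proposal is correct and follows essentially the same route as the paper: you telescope along the admissible path, and you observe that a rotation leaves the carried pair $(G,H)$ at the rotated subtree root unchanged, so every ancestor $\tau$-term agrees. The root difference is therefore $\pm\Curv_\rho$ times the product of child-mass fractions $G_V/(G_V+G_S)$ over the ancestors, after which you take the infimum over paths and the supremum over pairs, handling the orientation sign and the empty-path convention as the paper does.
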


\begin{proof}[Proof.]
An elementary rotation changes the second-order coefficient by inserting the local coefficient $\Curv_\rho$ inside the surrounding averaging context. The context has Lipschitz modulus $L_\rho$, so the propagated change has norm at most $L_\rho\|\Curv_\rho\|$. Summing along the finite path gives the first inequality. Taking the infimum over paths and then the supremum over endpoint pairs gives the second inequality.
\end{proof}

The following corollary gives a compact diagnostic in terms of a maximal local rotation coefficient and a path-length bound. Let $L_\rho\le L$ and $\|\Curv_\rho\|\le c$ for every rotation reached by the generated local chart, and let every pair of trees in $\mathcal U$ be joined inside $\mathcal U$ by a rotation path of length at most $m_n$.

\begin{corollary}\label{cor:global-bound}
The restricted density-level diameter satisfies
\[
\AODfw^{\mathcal U}(p_1^\eps,\ldots,p_n^\eps)\le \eps^2Lm_nc+o(\eps^2).
\]
\end{corollary}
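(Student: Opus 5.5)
The plan is to combine Proposition~\ref{prop:diameter-asymptotic} with Theorem~\ref{thm:rotation-bound}.

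\textbf{Step 1: bound the coefficient diameter.} Proposition~\ref{prop:diameter-asymptotic}, applied with the output distance generated by the coefficient norm, gives
\[
\AODfw^{\mathcal U}(p_1^\eps,\ldots,p_n^\eps)=\eps^2\Delta_2^{\mathcal U}+o(\eps^2),
\]
with the remainder uniform because $\mathcal U$ is finite. So it suffices to show $\Delta_2^{\mathcal U}\le Lm_nc$.

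\textbf{Step 2: bound each pair of trees.} Fix $T,T'\in\mathcal U$. By hypothesis there is a rotation path $\gamma:T\leadsto T'$ of length at most $m_n$ whose intermediate trees all lie in $\mathcal U$. The infimum in Theorem~\ref{thm:rotation-bound} is therefore over a nonempty set, and the $+\infty$ convention plays no role. Every rotation $\rho$ on $\gamma$ is a rotation reached by the generated local chart, since its three blocks and both bracketings come from trees in $\mathcal U$. Hence $L_\rho\le L$ and $\|\Curv_\rho\|\le c$, and the first inequality of Theorem~\ref{thm:rotation-bound} gives
\[
\|B_T-B_{T'}\|\le\sum_{\rho\in\gamma}L_\rho\|\Curv_\rho\|\le m_nLc.
\]

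\textbf{Step 3: take the supremum.} Taking the supremum over pairs $T,T'$ yields $\Delta_2^{\mathcal U}\le Lm_nc$. Substituting this into Step 1 gives the claim:
\[
\AODfw^{\mathcal U}(p_1^\eps,\ldots,p_n^\eps)\le \eps^2Lm_nc+o(\eps^2).
\]

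\textbf{Main point to check.} The only delicate issue is consistency of norms. The norm in $\Delta_2^{\mathcal U}$, the norm defining $L_\rho$ and $\|\Curv_\rho\|$, and the metric $d$ in $\AODfw^{\mathcal U}$ must all be the same. This is exactly the standing assumption before Proposition~\ref{prop:diameter-asymptotic}: the output distance is generated by the norm used in $\Delta_2^{\mathcal U}$, and the tree expansions hold in that norm uniformly over $T\in\mathcal U$. I would state this assumption explicitly, and note that in the $L^1$ or total-variation setting one may take $L=1$.
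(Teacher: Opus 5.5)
Your proposal is correct and follows the paper's proof. Like the paper, you use the length-$m_n$ rotation paths inside $\mathcal U$ and Theorem~\ref{thm:rotation-bound} to get $\Delta_2^{\mathcal U}\le Lm_nc$, then substitute this into the expansion of Proposition~\ref{prop:diameter-asymptotic}; your point about using one norm throughout is the standing assumption the paper states before that proposition.
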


\begin{proof}[Proof.]
Theorem~\ref{thm:rotation-bound} gives $\Delta_2^{\mathcal U}\le Lm_nc$. Proposition~\ref{prop:diameter-asymptotic} gives the stated bound.
\end{proof}

We next state a complementary lower-bound principle for a nonzero rotation coefficient. Assume the local metric expansion in Proposition~\ref{prop:rotation}. Let $\rho:T\to T'$ be an elementary rotation edge with $T,T'\in\mathcal U$, and let
\[
K_\rho=B_{T'}-B_T
\]
be its context-propagated second-order coefficient.

\begin{theorem}\label{thm:lower}
If $K_\rho\ne0$, then the corresponding local input family satisfies
\[
\liminf_{\eps\downarrow0}\eps^{-2}\AODfw^{\mathcal U}(p_1^\eps,\ldots,p_n^\eps)
\ge \|K_\rho\|.
\]
For a root rotation from $(AB)C$ to $A(BC)$, $K_\rho=-\Curv(A,B,C)$; for the reverse orientation, $K_\rho=\Curv(A,B,C)$. In either case, the lower bound uses $\|\Curv(A,B,C)\|$.
\end{theorem}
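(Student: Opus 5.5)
The plan is to bound the diameter from below by the distance between the two specific trees joined by the rotation edge, and then read off that distance from the second-order expansion. Since $T,T'\in\mathcal U$, the definition of the protocol-specific diameter gives, for every small $\eps$,
\[
\AODfw^{\mathcal U}(p_1^\eps,\ldots,p_n^\eps)\ge d(\Fw{T},\Fw{T'}).
\]
So it suffices to show $d(\Fw{T},\Fw{T'})=\eps^2\|K_\rho\|+o(\eps^2)$. Dividing by $\eps^2$ and taking $\liminf$ along $\eps\downarrow0$ then yields the bound. In fact it gives a limit, not just a $\liminf$, for this pair.

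For the expansion I will use Theorem~\ref{thm:tree}. Both $T$ and $T'$ lie in the admissible set, so
\[
\Fw{T}=p+\eps H+\eps^2B_T+o(\eps^2),\qquad \Fw{T'}=p+\eps H+\eps^2B_{T'}+o(\eps^2),
\]
with the common first-order coefficient $H=\sum_iG_ih_i/\sum_iG_i$. The coefficients $B_T,B_{T'}$ have zero mass, since every step of the recursion combines zero-mass coefficients linearly and $H_B-H_A$ has zero mass.

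Next I apply the local metric expansion assumed before Proposition~\ref{prop:rotation}, with $H_0=H$, $U=B_{T'}$ and $V=B_T$. This gives $d(\Fw{T},\Fw{T'})=\eps^2\|B_{T'}-B_T\|+o(\eps^2)=\eps^2\|K_\rho\|+o(\eps^2)$. The hypothesis $K_\rho\neq0$ is used only to make the bound nontrivial, so that the diameter is genuinely of exact order $\eps^2$. The point needing care is that the metric expansion is assumed uniformly over the generated chart. Applying it to this fixed pair of trees requires only that both outputs lie in that chart, and this is exactly what admissibility supplies.

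For the root-rotation identification, take $T=(AB)C$ and $T'=A(BC)$, both at the root. Then $B_T=B^{(AB)C}$ and $B_{T'}=B^{A(BC)}$, so by definition of $\Curv$ we get $K_\rho=B^{A(BC)}-B^{(AB)C}=-\Curv(A,B,C)$. The reverse orientation swaps the roles of $T$ and $T'$ and gives $K_\rho=\Curv(A,B,C)$. Since norms are symmetric, $\|K_\rho\|=\|\Curv(A,B,C)\|$ in both cases. This matches Proposition~\ref{prop:rotation}, whose second display gives the same lower bound directly. I do not expect a real obstacle anywhere; the only subtlety is checking that the uniform-chart hypothesis covers both outputs.
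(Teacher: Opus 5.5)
Your proposal is correct and follows essentially the same argument as the paper. You bound the restricted diameter below by the distance between the two endpoint trees of the rotation edge, and you use the tree expansion with the assumed local metric expansion to get $d(\Fw{T},\Fw{T'})=\eps^2\|K_\rho\|+o(\eps^2)$. For root rotations you read off $K_\rho=\mp\Curv(A,B,C)$ directly from the definition of $\Curv$, exactly as the paper does.
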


\begin{proof}[Proof of Theorem~\ref{thm:lower}.]
The restricted diameter is the supremum over all tree pairs in $\mathcal U$, so it is at least the distance between the endpoints of $\rho$. The local edge expansion gives $\Fw{T'}-\Fw{T}=\eps^2K_\rho+o(\eps^2)$. The metric expansion then gives $d(\Fw{T},\Fw{T'})=\eps^2\|K_\rho\|+o(\eps^2)$. Division by $\eps^2$ and passage to the lower limit prove the claim.
\end{proof}

A scalar calibration assigns each leaf a positive transformed mass $G_i=g(w_i)$, propagates masses additively, and uses endpoint balancing weights proportional to $G_A^2$ and $G_B^2$. Equal supplied weights give equal transformed leaf masses. The next statement shows that such a scalar weight change cannot remove the finite three-state obstruction.

\begin{proposition}\label{prop:scalar}
Assume $f'''(1)\ne0$. For the equal-weight three-point witness of Theorem~\ref{thm:witness}, every scalar calibration described above leaves the second-order rotation coefficient equal to
\[
-\frac{3\alpha^2}{4}\frac{f'''(1)}{f''(1)}(1,0,-1).
\]
Scalar weight calibration alone does not make the second-order root coefficient independent of the aggregation tree on that local input family.
\end{proposition}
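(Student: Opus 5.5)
The proof reduces to the observation that, for equal supplied weights, every scalar calibration produces the same normalized merge weights, so the computation behind Theorem~\ref{thm:witness} can be rerun unchanged. With $w_1=w_2=w_3=1$, every calibration assigns the same transformed mass $G_i=g(1)=:\lambda>0$ to each leaf. Masses propagate additively, so a lower merge uses $G_{12}=G_{23}=2\lambda$. The upper merges therefore use endpoint weights $(4\lambda^2,\lambda^2)$ and $(\lambda^2,4\lambda^2)$, and the lower merges use $(\lambda^2,\lambda^2)$. Balancing weights proportional to $G_A^2,G_B^2$ could also carry a merge-dependent positive factor. Such a factor multiplies both sides of the balancing equation and does not change the root, so it can be ignored.

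The next step is to check that $\tau(a,b;u_A,u_B)$ in Theorem~\ref{thm:binary} is invariant under the scaling $(a,b)\mapsto(\lambda^2a,\lambda^2b)$. First, $t_0=\sqrt b/(\sqrt a+\sqrt b)$ is homogeneous of degree zero. The numerator $a t_0^3+b(1-t_0)^3$ and the denominator $a t_0+b(1-t_0)$ of $\tau$ are both homogeneous of degree one in $(a,b)$, so their ratio is unchanged. Consequently $\theta_{AB}=G_B/(G_A+G_B)$ and the first-order summaries $H_{12}=(h_1+h_2)/2$ and $H_{23}=(h_2+h_3)/2$ coincide with those of the uncalibrated protocol. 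Every $\tau$ entering the recursion of Theorem~\ref{thm:tree} is identical as well.

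It then remains to rerun the recursion of Theorem~\ref{thm:tree}. The lower merges still have $L_3(1,2)=L_3(2,3)=0$, so their $\tau$ vanish. The upper merges reproduce $\tau_{(12),3}=k\alpha/2$ and $\tau_{1,(23)}=-k\alpha/2$. This gives the same $B^{(12)3}$ and $B^{1(23)}$ and therefore the same rotation coefficient $\Curv(1,2,3)=-\tfrac{3\alpha^2}{4}\tfrac{f'''(1)}{f''(1)}(1,0,-1)$. Admissibility also needs checking: $J(A,B)$ depends only on the first-order coefficients, which are unchanged, so all merges are nondegenerate. Since $f'''(1)\neq0$ and $\alpha\neq0$, the coefficient is nonzero. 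The two root trees therefore have different second-order coefficients, and tree independence fails. Proposition~\ref{prop:rotation} or Theorem~\ref{thm:lower} converts this into a nonvanishing order-$\eps^2$ discrepancy.

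The main obstacle is only definitional: fixing precisely what a scalar calibration may alter. The argument relies on three features: the calibration acts leafwise through a function of $w_i$ alone; masses are additive; and balancing weights are proportional to $G_A^2,G_B^2$. The key step is the degree-zero homogeneity of $t_0$ and $\tau$. I would state explicitly that it is exactly this invariance that leaves the coefficient unchanged.
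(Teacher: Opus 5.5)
Your proposal is correct and follows essentially the same route as the paper. Equal supplied weights give a common transformed mass, so each merge's balancing weights differ from the uncalibrated ones only by a common positive factor, and the degree-zero homogeneity of $t_0$ and $\tau$ in $(a,b)$ lets the computation of Theorem~\ref{thm:witness} go through unchanged; your explicit admissibility check and the remark on merge-dependent proportionality factors are harmless additions that the paper leaves implicit.
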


\begin{proof}[Proof.]
Let the common supplied weight be $w_0$, and let $G_0=g(w_0)>0$ be the common transformed leaf mass. The lower merge has transformed mass $2G_0$, and the upper merge has endpoint transformed masses $2G_0$ and $G_0$. The endpoint balancing weights are proportional to $4G_0^2$ and $G_0^2$, so their common factor cancels in $t_0$ and in $\tau$. The computation from Theorem~\ref{thm:witness} is unchanged and gives the stated nonzero vector.
\end{proof}

\section{Conclusion}\label{sec:conclusion}
We have studied a local asymptotic theory of aggregation-order variation for density fusion under a fixed mass-carrying divergence-balancing protocol. The protocol-specific diameter retains the supplied weights and captures the spread of final density summaries obtained by changing only the ordered aggregation tree. For smooth divergence balancing on an admissible fixed-size tree set, the expansion gives a precise separation: square-root-transformed weights remove first-order tree dependence, while the first potentially tree-dependent coefficient appears at second order.

We have shown that the second-order coefficient has a direct statistical interpretation. It is governed by the local curvature ratio $f'''(1)/f''(1)$ and by a cubic moment of first-order perturbation contrasts. The finite-state posterior calculation shows that an ordinary posterior contrast can detect the coefficient. The quantity-of-interest result transfers the density-level expansion to bounded posterior means, tail probabilities, bounded losses, and finite-state contrasts whenever the selected functional class detects the signed coefficient. For unbounded posterior means, the same transfer requires additional uniform moment or weighted-$L^1$ assumptions.

When the protocol carries the required chart data, our local correction cancels the tree-dependent second-order coefficient. The same coefficient therefore diagnoses local tree sensitivity and specifies the correction required to remove it. We leave three directions for future work: deriving nonasymptotic bounds for concrete posterior-fusion rules, extending the correction to particle and mixture representations, and studying data-dependent tree selection in distributed Bayesian computation.

\vspace{1cm}
\bibliographystyle{splncs04}
\bibliography{ref}

\newpage
\appendix
\section*{Appendix}
\addcontentsline{toc}{section}{Appendix: Extended Proofs and Auxiliary Results}

Appendices~A--D give the extended derivations for Sections~4--7. Appendix~E gives the complete derivations for the stability, diameter, rotation-bound, lower-bound, and calibration results in Section~\ref{sec:bounds}.

\section{Proofs for Section 4}
 The reference density $p$ is strictly positive, and all density ratios used below remain in a bounded neighborhood of one. The ratios $u_A=h_A/p$, $u_B=h_B/p$, $b_A=B_A/p$, and $b_B=B_B/p$ have zero $p$-mean because the corresponding density coefficients have zero $\mu$-integral. We use
\[
\Delta=u_B-u_A,
\quad
p_t^\eps=(1-t)r^\eps+ts^\eps,
\quad
\nu_t=u_A+t\Delta
\,\,\,\,\text{and}\,\,\,\,\,
c_t=b_A+t(b_B-b_A).
\]
The bounded-ratio hypotheses give all Taylor remainders below uniformly for $t$ in each compact subinterval of $[0,1]$ and, in the root proof, uniformly on a fixed neighborhood of $t_0$.

\subsection{Local divergence expansion}

We first derive the expansion of the two divergences used in the balancing equation. The endpoint densities have the form
\[
r^\eps=p\{1+\eps u_A+\eps^2b_A+o(\eps^2)\}
\,\,\,\,\text{and}\,\,\,\,\,
s^\eps=p\{1+\eps u_B+\eps^2b_B+o(\eps^2)\}.
\]
Consequently,
\[
\frac{p_t^\eps}{p}=1+\eps\nu_t+\eps^2c_t+o(\eps^2).
\]
For any bounded $x_\eps=\eps\nu_t+\eps^2c_t+o(\eps^2)$, the identity $(1+x_\eps)^{-1}=1-x_\eps+x_\eps^2+o(\eps^2)$ gives
\[
\frac{p}{p_t^\eps}=1-\eps\nu_t+
\eps^2(\nu_t^2-c_t)+o(\eps^2).
\]
Multiplication with $r^\eps/p$ yields
\[
\begin{aligned}
\frac{r^\eps}{p_t^\eps}
&=(1+\eps u_A+\eps^2b_A)
  (1-\eps\nu_t+\eps^2(\nu_t^2-c_t))+o(\eps^2)\\
&=1+\eps(u_A-\nu_t)+\eps^2\{b_A+\nu_t^2-c_t-u_A\nu_t\}+o(\eps^2).
\end{aligned}
\]
Since $u_A-\nu_t=-t\Delta$ and $c_t=b_A+t(b_B-b_A)$, the second-order coefficient becomes
\[
b_A+\nu_t^2-c_t-u_A\nu_t
=t\{\Delta(u_A+t\Delta)-(b_B-b_A)\}.
\]
Hence,
\[
\frac{r^\eps}{p_t^\eps}=1-\eps t\Delta+
\eps^2t\{\Delta(u_A+t\Delta)-(b_B-b_A)\}+o(\eps^2).
\]
The same calculation for $s^\eps$ gives
\[
\frac{s^\eps}{p_t^\eps}=1+\eps(1-t)\Delta+
\eps^2(1-t)\{(b_B-b_A)-\Delta(u_A+t\Delta)\}+o(\eps^2).
\]
Let
\[
A_2=\frac{f''(1)}2 \,\,\,\,\text{and}\,\,\,\,\,
A_3=\frac{f'''(1)}6.
\]
Because $f(1)=f'(1)=0$ after the standard affine normalization of an $f$-divergence generator, Taylor's formula gives
\[
f(1+z)=A_2z^2+A_3z^3+o(z^3)
\]
uniformly for bounded $z\to0$. We apply the formula with
\[
z_r=-\eps t\Delta+
\eps^2t\{\Delta\nu_t-(b_B-b_A)\}+o(\eps^2).
\]
Then
\[
z_r^2=\eps^2t^2\Delta^2-2\eps^3t^2\Delta\{\Delta\nu_t-(b_B-b_A)\}+o(\eps^3)
\]
and
\[
z_r^3=-\eps^3t^3\Delta^3+o(\eps^3).
\]
Multiplying by the middle density
\[
p_t^\eps=p\{1+\eps\nu_t+O(\eps^2)\}
\]
and retaining terms through order $\eps^3$ gives
\[
\begin{aligned}
D_f(r^\eps\|p_t^\eps)
&=\int p_t^\eps f\!\left(\frac{r^\eps}{p_t^\eps}\right)d\mu\\
&=A_2\eps^2t^2\int\Delta^2p\,d\mu\\
&\quad+\eps^3\Biggl\{
A_2t^2\int\{-2\Delta(\Delta\nu_t-(b_B-b_A))
+\Delta^2\nu_t\}p\,d\mu
-A_3t^3\int\Delta^3p\,d\mu\Biggr\}+o(\eps^3).
\end{aligned}
\]
The integrand in the $A_2$ term reduces to
\[
-2\Delta^2\nu_t+2\Delta(b_B-b_A)+\Delta^2\nu_t
=2\Delta(b_B-b_A)-\Delta^2\nu_t.
\]
With
\[
\begin{aligned}
J&=\int\Delta^2p\,d\mu,
&L_3&=\int\Delta^3p\,d\mu,\\
M&=\int\Delta(b_B-b_A)p\,d\mu \quad\text{and}\quad
&K(t)&=\int\Delta^2\nu_t p\,d\mu.
\end{aligned}
\]
we obtain
\[
D_f(r^\eps\|p_t^\eps)=A_2\eps^2t^2J+
\eps^3\{A_2t^2(2M-K(t))-A_3t^3L_3\}+o(\eps^3).
\]
For the second endpoint, we set
\[
z_s=\eps(1-t)\Delta+
\eps^2(1-t)\{(b_B-b_A)-\Delta\nu_t\}+o(\eps^2).
\]
Then,
\[
z_s^2=\eps^2(1-t)^2\Delta^2+2\eps^3(1-t)^2\Delta\{(b_B-b_A)-\Delta\nu_t\}+o(\eps^3)
\]
and
\[
z_s^3=\eps^3(1-t)^3\Delta^3+o(\eps^3).
\]
Multiplication by $p_t^\eps$ gives
\[
D_f(s^\eps\|p_t^\eps)=A_2\eps^2(1-t)^2J+
\eps^3\{A_2(1-t)^2(2M-K(t))+A_3(1-t)^3L_3\}+o(\eps^3).
\]
These two terms are the local divergence expansions used in Section~\ref{sec:local}.

\subsection{Root selection and uniformity}

The balancing residual is
\[
\Phi_\eps(t)=aD_f(r^\eps\|p_t^\eps)-bD_f(s^\eps\|p_t^\eps).
\]
The previous subsection gives
\[
\eps^{-2}\Phi_\eps(t)=\Phi_2(t)+\eps\Phi_3(t)+o(\eps),
\]
where $\Phi_2(t)=A_2J\{at^2-b(1-t)^2\}$.
The point
\[
t_0=\frac{\sqrt b}{\sqrt a+\sqrt b}
\]
is the unique zero of $\Phi_2$ in $(0,1)$ because $at^2=b(1-t)^2$ and the positive square root gives $\sqrt a\,t=\sqrt b(1-t)$. Moreover,
\[
\Phi_2'(t)=2A_2J\{at+b(1-t)\} \,\,\,\,\text{and}\,\,\,\,\,
\Phi_2'(t_0)=2A_2J\{at_0+b(1-t_0)\}>0.
\]

The derivative statement used for uniqueness follows from the exact integrands. Let
\[
R_t^\eps=\frac{r^\eps}{p_t^\eps},
\quad
S_t^\eps=\frac{s^\eps}{p_t^\eps}
\quad\text{and}\quad
\partial_t p_t^\eps=s^\eps-r^\eps.
\]
Direct differentiation gives
\[
\partial_t R_t^\eps
=-R_t^\eps\frac{s^\eps-r^\eps}{p_t^\eps}
\quad\text{and}\quad
\partial_t S_t^\eps
=-S_t^\eps\frac{s^\eps-r^\eps}{p_t^\eps}.
\]
Consequently,
\[
\begin{aligned}
\partial_t\{p_t^\eps f(R_t^\eps)\}
&=(s^\eps-r^\eps)\{f(R_t^\eps)-R_t^\eps f'(R_t^\eps)\},\\
\partial_t\{p_t^\eps f(S_t^\eps)\}
&=(s^\eps-r^\eps)\{f(S_t^\eps)-S_t^\eps f'(S_t^\eps)\}.
\end{aligned}
\]
For $g(x)=f(x)-xf'(x)$, Taylor expansion at one yields
\[
g(1+z)=-2A_2z+O(z^2).
\]

The derivative Taylor expansion is
\[
f'(1+z)=2A_2z+3A_3z^2+o(z^2)
=f''(1)z+\frac12f'''(1)z^2+o(z^2).
\]
Lemma~\ref{lem:chart} gives $R_t^\eps-1=o(1)$ and $S_t^\eps-1=o(1)$ uniformly for $|t-t_0|\le\eta$. Bounded endpoint-to-reference ratios imply
\[
|s^\eps-r^\eps|\le C|\eps|p,
\]
uniformly on the same neighborhood. More precisely, let
\[
\delta_\eps=sup_{|t-t_0|\le\eta}
\max\{\|R_t^\eps-1\|_\infty,\|S_t^\eps-1\|_\infty\}=O(|\eps|).
\]
There is a function $\rho(\delta)\to0$ as $\delta\downarrow0$ such that
\[
|f'(1+z)-2A_2z-3A_3z^2|\le \rho(\delta)z^2,
\qquad |z|\le\delta.
\]
In particular, $|g(1+z)+2A_2z|\le Cz^2$ on a fixed neighborhood of zero. Hence, the Taylor-remainder contribution to $\partial_t\Phi_\eps(t)$ is bounded uniformly in $t$ by
\[
C|\eps|p\,\delta_\eps^2=O(|\eps|^3)p=o(\eps^2)p.
\]
Integration against $\mu$ gives the uniform $o(\eps^2)$ derivative remainder below.

The ratio expansions in the preceding subsection and the endpoint difference give, uniformly for $|t-t_0|\le\eta$,
\begin{itemize}[label=\textendash]
    \item $R_t^\eps-1=-\eps t\Delta+O(\eps^2),$
\item $S_t^\eps-1=\eps(1-t)\Delta+O(\eps^2),$
\item $s^\eps-r^\eps=\eps p\Delta+O(\eps^2p).$
\end{itemize}

It follows that
\[
\begin{aligned}
\partial_t\Phi_\eps(t)
&=\int (s^\eps-r^\eps)
   \{a g(R_t^\eps)-b g(S_t^\eps)\}\,d\mu\\
&=2A_2\eps^2\{at+b(1-t)\}
  \int\Delta^2p\,d\mu+o(\eps^2)\\
&=\eps^2\Phi_2'(t)+o(\eps^2).
\end{aligned}
\]
Therefore,
\[
\eps^{-2}\partial_t\Phi_\eps(t)=\Phi_2'(t)+o(1)
\]
uniformly on the fixed neighborhood.

We now prove existence and uniqueness. Since $\Phi_2'(t_0)>0$, there are $c>0$ and $\eta>0$ with $\eta<\min\{t_0,1-t_0\}$ such that $\Phi_2'(t)\ge2c$ whenever $|t-t_0|\le\eta$. Uniform derivative convergence implies $\partial_t\Phi_\eps(t)\ge c\eps^2>0$ on the interval for all sufficiently small nonzero $\eps$. Hence, $\Phi_\eps$ is strictly increasing on the interval. Also, the signs of $\Phi_2(t_0-\eta)$ and $\Phi_2(t_0+\eta)$ are negative and positive, respectively. Uniform convergence of $\eps^{-2}\Phi_\eps$ to $\Phi_2$ preserves these signs for small $|\eps|$. The intermediate value theorem gives a root in $(t_0-\eta,t_0+\eta)$, and strict monotonicity gives uniqueness of that local root.

Let $t_\eps$ denote the unique root. The sign argument gives $t_\eps\to t_0$. Dividing the root equation by $\eps^2$ gives
\[
0=\Phi_2(t_\eps)+\eps\Phi_3(t_\eps)+o(\eps).
\]
Since $\Phi_2$ has a simple zero at $t_0$, there are $c_0>0$ and a neighborhood $I$ of $t_0$ such that $|\Phi_2(t)|\ge c_0|t-t_0|$ for $t\in I$. 
Local boundedness of $\Phi_3$ gives a constant $C>0$ such that
\[
\begin{aligned}
c_0|t_\eps-t_0|
&\le |\Phi_2(t_\eps)|\\
&\le |\eps|\,|\Phi_3(t_\eps)|+o(|\eps|)\\
&\le C|\eps|+o(|\eps|).
\end{aligned}
\]
Division by $c_0$ proves $t_\eps-t_0=O(\eps)$.

Taylor expansion of $\Phi_2$ at $t_0$ and continuity of the coefficient $\Phi_3$ give
\[
\Phi_2(t_\eps)=\Phi_2'(t_0)(t_\eps-t_0)+o(\eps)
\quad\text{and}\quad
\Phi_3(t_\eps)=\Phi_3(t_0)+o(1).
\]
Substitution in the root equation divided by $\eps^2$ yields
\[
0=\Phi_2'(t_0)(t_\eps-t_0)+\eps\Phi_3(t_0)+o(\eps).
\]
After division by $\eps$,
\[
\frac{t_\eps-t_0}{\eps}=-\frac{\Phi_3(t_0)}{\Phi_2'(t_0)}+o(1).
\]
Thus, $t_\eps=t_0+\eps t_1+o(\eps)$ with $t_1=-\Phi_3(t_0)/\Phi_2'(t_0)$.
At $t_0$, the equality $at_0^2=b(1-t_0)^2$ cancels the shared $A_2(2M-K(t_0))$ term in $\Phi_3(t_0)$. Hence,
\[
\Phi_3(t_0)=-A_3L_3\{at_0^3+b(1-t_0)^3\}.
\]
Consequently,
\[
t_1=\frac{A_3}{\Phi_2'(t_0)}L_3\{at_0^3+b(1-t_0)^3\}.
\]
Using $A_3/A_2=f'''(1)/(3f''(1))$ and $\Phi_2'(t_0)=2A_2J\{at_0+b(1-t_0)\}$ gives the coefficient $\tau$ used in Theorem~\ref{thm:binary}.

For completeness, let $\mathcal N_k$ be the finite set of nodes of height at most $k$ occurring in the trees of $\mathcal U$, and let $r_v^\eps$ denote the remainder at a node $v$. Set
\[
R_k(\eps)=\max_{v\in\mathcal N_k}
\frac{\|r_v^\eps\|_{p,\infty}}{\eps^2}.
\]
At height zero, finiteness of the leaf set and the leaf assumptions give
\[
R_0(\eps)\longrightarrow0.
\]
Assume $R_{k-1}(\eps)\to0$. Every node $v\in\mathcal N_k\setminus\mathcal N_{k-1}$ merges two nodes of height at most $k-1$. The explicit parent-remainder identity derived in Appendix~B expresses $r_v^\eps$ as a convex combination of the two child remainders plus terms of order
\[
\eps\,o(\eps),
\quad
O(\eps)O(\eps^2)
\quad\text{and}\quad
O(\eps)o(\eps^2).
\]
After division by $\eps^2$, every additional term tends to zero. Since $\mathcal N_k\setminus\mathcal N_{k-1}$ is finite,
\[
R_k(\eps)
\le R_{k-1}(\eps)+o(1)
\longrightarrow0.
\]
Induction up to height $n-1$ gives
\[
\max_{T\in\mathcal U}
\frac{\|r_T^\eps\|_{p,\infty}}{\eps^2}
\le R_{n-1}(\eps)\longrightarrow0,
\]
which proves the uniform root remainder.

\section{Proofs for Section 5}

We give complete derivations for the tree expansion and for the mass constraints implicit in the density chart. The proof uses structural induction over ordered binary trees.

\subsection{Mass preservation}

At every leaf $i$, the local density expansion
\[
p_i^\eps=p+\eps h_i+\eps^2B_i+o(\eps^2)
\]
consists of densities integrating to one. Integrating both sides and comparing the coefficients of $\eps$ and $\eps^2$ gives
\[
\int h_i\,d\mu=0 \,\,\text{and}\,\,
\int B_i\,d\mu=0.
\]
We prove by induction that every generated node has zero-mass first- and second-order coefficients. The assertion is true at leaves by the preceding term. For an internal merge of child states $(G_A,H_A,B_A)$ and $(G_B,H_B,B_B)$, let
$\tau_{AB}=\tau(G_A^2,G_B^2;H_A/p,H_B/p)$. The first-order coefficient is
\[
H_{AB}=\frac{G_AH_A+G_BH_B}{G_A+G_B}.
\]
If $\int H_A\,d\mu=\int H_B\,d\mu=0$, then $\int H_{AB}\,d\mu=0$. The second-order coefficient is
\[
B_{AB}=\frac{G_AB_A+G_BB_B}{G_A+G_B}+\tau_{AB}(H_B-H_A).
\]
The scalar $\tau_{AB}$ may depend on the child perturbation ratios, but it is constant with respect to the integration variable. Therefore,
\[
\int B_{AB}d\mu=\frac{G_A\int B_Ad\mu+G_B\int B_Bd\mu}{G_A+G_B}+\tau_{AB}\left(\int H_Bd\mu-\int H_Ad\mu\right)=0.
\]
The corrected recursion removes the $\tau_{AB}(H_B-H_A)$ term and is a weighted average, so the same induction proves zero mass for the corrected second-order coefficients.

\subsection{Proof of the tree expansion}

We prove Theorem~\ref{thm:tree} by structural induction. For a node $A$, let $(G_A,q_A^\eps)$ be the state assigned to the subtree rooted at $A$ by the mass-carrying protocol. The induction invariant states that the density component has an expansion
\[
q_A^\eps=p+\eps H_A+\eps^2B_A+r_A^\eps
\,\,\,\,\text{and}\,\,\,\,
\|r_A^\eps\|_{p,\infty}=o(\eps^2),
\]
uniformly over all generated subtrees with the same leaf set size bounded by $n$, and the associated transformed mass is
\[
G_A=\sum_{i\in A}G_i.
\]
At a leaf $A=\{i\}$, the assertion is precisely the leaf assumption with $q_A^\eps=p_i^\eps$, $G_A=G_i$, $H_A=h_i$, and the given $B_A=B_i$. In the special purely first-order leaf convention, $B_i=0$.

Assume the invariant holds for the two children $A$ and $B$ of an internal node. Their endpoint expansions are
\[
q_A^\eps=p+\eps H_A+\eps^2B_A+r_A^\eps
\,\,\,\,\text{and}\,\,\,\,
q_B^\eps=p+\eps H_B+\eps^2B_B+r_B^\eps,
\]
with $r_A^\eps=o(\eps^2)$ and $r_B^\eps=o(\eps^2)$ uniformly. The binary balancing theorem applies to these endpoint summaries with balancing weights $G_A^2$ and $G_B^2$. The leading root weight for the second endpoint is
\[
t_0=\frac{\sqrt{G_B^2}}{\sqrt{G_A^2}+\sqrt{G_B^2}}=\frac{G_B}{G_A+G_B}.
\]
Admissibility supplies $H_A/p,H_B/p,B_A/p,B_B/p\in L^\infty(\mu)$ and $J(A,B)>0$ for the generated merge, so Lemma~\ref{lem:root} selects $t_{AB}^\eps$ and Theorem~\ref{thm:binary} applies at the induction step.
The first-order coefficient at the parent is therefore
\[
(1-t_0)H_A+t_0H_B
=\frac{G_AH_A+G_BH_B}{G_A+G_B}.
\]
The second-order coefficient has two sources: the weighted average of the child second-order coefficients and the local shift of the balancing root. Hence,
\[
B_{AB}=\frac{G_AB_A+G_BB_B}{G_A+G_B}
+\tau(G_A^2,G_B^2;H_A/p,H_B/p)(H_B-H_A).
\]
The transformed mass at the parent is additive:
\[
G_{AB}=G_A+G_B.
\]

We verify the parent remainder explicitly. Let
\[
\tau_{AB}=\tau(G_A^2,G_B^2;H_A/p,H_B/p)
\]
and write the balancing root as
\[
t_\eps=t_0+\eps\tau_{AB}+\zeta_{AB}^\eps
\quad\text{and}\quad
\zeta_{AB}^\eps=o(\eps).
\]
Since the parent density equals
\[
q_{AB}^\eps
=(1-t_0)q_A^\eps+t_0q_B^\eps
+(\eps\tau_{AB}+\zeta_{AB}^\eps)
 (q_B^\eps-q_A^\eps),
\]
substitution of the two child expansions gives
\[
q_{AB}^\eps
=p+\eps H_{AB}+\eps^2B_{AB}+r_{AB}^\eps,
\]
where
\[
\begin{aligned}
r_{AB}^\eps
&=(1-t_0)r_A^\eps+t_0r_B^\eps
  +\eps\zeta_{AB}^\eps(H_B-H_A)\\
&\quad+\eps^2(\eps\tau_{AB}+\zeta_{AB}^\eps)(B_B-B_A)\\
&\quad+(\eps\tau_{AB}+\zeta_{AB}^\eps)(r_B^\eps-r_A^\eps).
\end{aligned}
\]
The four terms satisfy, respectively,
\[
o(\eps^2),
\quad
\eps o(\eps)=o(\eps^2),
\quad
O(\eps^3)
\quad\text{and}\quad
O(\eps)o(\eps^2)=o(\eps^2).
\]
Therefore, $\|r_{AB}^\eps\|_{p,\infty}=o(\eps^2)$, and the induction invariant holds at the parent.

The root first-order coefficient is independent of the tree because the recursion for $H_A$ is the weighted average associated with the additive masses $G_A$. A second induction over subtrees gives
\[
H_A=\frac{\sum_{i\in A}G_ih_i}{\sum_{i\in A}G_i}.
\]
At the root $A=\{1,\ldots,n\}$, the formula becomes
\[H=\frac{\sum_iG_ih_i}{\sum_iG_i}.\]
The protocol definition gives $q_{\{1,\ldots,n\}}^\eps=\Fw{T}$ at the root. The second-order coefficient is exactly the value obtained by applying the stated recursion at every internal node of the chosen tree. Since the binary remainder is uniform on the common bounded chart neighborhood and the number of internal nodes is finite, the induction on node height in Appendix~A gives a root remainder $o(\eps^2)$ uniformly over $T\in\mathcal U$.

\section{Proofs for Section 6}

We expand the finite-state witness and the quantity-of-interest argument. The finite witness uses only three states and equal supplied weights, so every coefficient can be checked directly.

\subsection{Finite-state witness}

Let the reference density on three states be $p=(1/3,1/3,1/3)$. The perturbations are
\[
h_1=\alpha(2,-1,-1),\quad h_2=\alpha(-1,2,-1) \,\,\,\,\text{and}\,\,\,\,\, h_3=\alpha(-1,-1,2).
\]
Each perturbation has zero sum. For sufficiently small $|\eps|$, the leaf densities $p_i^\eps=p+\eps h_i$ are strictly positive and have total mass one. The supplied weights are equal, so the transformed leaf masses are $G_1=G_2=G_3=1$.

We first compute the left bracketing $((12)3)$. At the lower merge,
\[
H_{12}=\frac{h_1+h_2}{2}=\frac{\alpha}{2}(1,1,-2),
\,\,\text{and}\,\,
G_{12}=2.
\]

The lower contrast and cubic moment are
\[
\Delta_{1,2}=\frac{h_2-h_1}{p}=9\alpha(-1,1,0)
\]
and
\[
L_3(1,2)=\frac13\{(-9\alpha)^3+(9\alpha)^3+0^3\}=0.
\]
Let $\tau_{1,2}=\tau(G_1^2,G_2^2;h_1/p,h_2/p)$. Thus, $\tau_{1,2}=0$, and the lower second-order coefficient is
\[
B_{12}=\tau_{1,2}(h_2-h_1)=0.
\]
The upper merge combines block $12$ with leaf $3$, so
\[
H_3-H_{12}=\alpha(-1,-1,2)-\frac{\alpha}{2}(1,1,-2)=\frac{3\alpha}{2}(-1,-1,2).
\]
Dividing by $p=(1/3,1/3,1/3)$ gives the first-order ratio contrast
\[
\Delta_{(12),3}=\frac{H_3-H_{12}}p=\frac{9\alpha}{2}(-1,-1,2).
\]
For the upper merge, the weights are $a=G_{12}^2=4$ and $b=G_3^2=1$, so
\[
t_0=\frac{1}{2+1}=\frac13.
\]
The cubic moment is
\[
\begin{aligned}
L_3&=\int \Delta_{(12),3}^3p\,d\mu
=\frac13\left[\left(-\frac{9\alpha}{2}\right)^3+
\left(-\frac{9\alpha}{2}\right)^3+(9\alpha)^3\right]\\
&=\frac13\left[-\frac{729\alpha^3}{8}-\frac{729\alpha^3}{8}+729\alpha^3\right]
=\frac{729\alpha^3}{4}.
\end{aligned}
\]
The quadratic moment is
\[
J=\int \Delta_{(12),3}^2p\,d\mu
=\frac13\left[\frac{81\alpha^2}{4}+\frac{81\alpha^2}{4}+81\alpha^2\right]
=\frac{81\alpha^2}{2}.
\]
Consequently, $L_3/J=9\alpha/2$. Let $\tau_{(12),3}=\tau(G_{12}^2,G_3^2;H_{12}/p,H_3/p)$. Substitution into the expression for $\tau$ with $a=4$, $b=1$, and $t_0=1/3$ gives
\[
\tau_{(12),3}
=\frac{f'''(1)}{3f''(1)}\cdot\frac{\alpha}{2}=\frac{k\alpha}{2}
\,\,\,\,\text{and}\,\,\,\,\,
k=\frac{f'''(1)}{3f''(1)}.
\]
The upper contribution is
\[
B^{(12)3}=\tau_{(12),3}(H_3-H_{12})
=\frac{k\alpha}{2}\cdot\frac{3\alpha}{2}(-1,-1,2)
=k\alpha^2\left(-\frac34,-\frac34,\frac32\right).
\]

We now compute the right bracketing $1(23)$. At the lower merge,
\[
H_{23}=\frac{h_2+h_3}{2}=\frac{\alpha}{2}(-2,1,1),
\,\,\text{and}\,\,
G_{23}=2.
\]

At the lower merge,
\[
\Delta_{2,3}=\frac{h_3-h_2}{p}=9\alpha(0,-1,1),
\]
and therefore
\[
L_3(2,3)=\frac13\{0^3+(-9\alpha)^3+(9\alpha)^3\}=0.
\]
Let $\tau_{2,3}=\tau(G_2^2,G_3^2;h_2/p,h_3/p)$. Consequently, $\tau_{2,3}=0$ and $B_{23}=0$.

The upper contrast is
\[
H_{23}-H_1=\frac{\alpha}{2}(-2,1,1)-\alpha(2,-1,-1)=\frac{3\alpha}{2}(-2,1,1).
\]

The corresponding ratio contrast is
\[
\Delta_{1,(23)}=\frac{H_{23}-H_1}{p}=\frac{9\alpha}{2}(-2,1,1).
\]
Its quadratic and cubic moments are
\[
\begin{aligned}
J_{1,(23)}
&=\frac13\left\{81\alpha^2+\frac{81\alpha^2}{4}
                   +\frac{81\alpha^2}{4}\right\}
=\frac{81\alpha^2}{2},\\
L_{3,1,(23)}
&=\frac13\left\{-729\alpha^3+\frac{729\alpha^3}{8}
                    +\frac{729\alpha^3}{8}\right\}
=-\frac{729\alpha^3}{4}.
\end{aligned}
\]
Hence, $L_{3,1,(23)}/J_{1,(23)}=-9\alpha/2$. The upper weights are
\[
a=G_1^2=1,
\quad
b=G_{23}^2=4
\quad\text{and}\quad
t_0=\frac23.
\]
Let $\tau_{1,(23)}=\tau(G_1^2,G_{23}^2;H_1/p,H_{23}/p)$. Substitution in Theorem~\ref{thm:binary} gives
\[
\begin{aligned}
\tau_{1,(23)}
&=\frac{A_3}{A_2}\frac{L_{3,1,(23)}}{J_{1,(23)}}
  \frac{1(2/3)^3+4(1/3)^3}
       {2\{1(2/3)+4(1/3)\}}\\
&=k\left(-\frac{9\alpha}{2}\right)\frac19
=-\frac{k\alpha}{2}.
\end{aligned}
\]
Therefore,
\[
B^{1(23)}
=\tau_{1,(23)}(H_{23}-H_1)
=-\frac{k\alpha}{2}\frac{3\alpha}{2}(-2,1,1)
=k\alpha^2\left(\frac32,-\frac34,-\frac34\right).
\]
Therefore,
\[
B^{(12)3}-B^{1(23)}
=k\alpha^2\left(-\frac94,0,\frac94\right)
=-\frac{9k\alpha^2}{4}(1,0,-1)
=-\frac{3\alpha^2}{4}\frac{f'''(1)}{f''(1)}(1,0,-1).
\]
For the centered Kullback--Leibler generator $f(u)=u\log u-u+1$, $f''(1)=1$ and $f'''(1)=-1$. Hence, the leading difference is
\[
\frac{3\alpha^2}{4}(1,0,-1).
\]
Testing against the contrast $(1,0,-1)$ gives the leading coefficient
\[
\left\langle (1,0,-1),\frac{3\alpha^2}{4}(1,0,-1)\right\rangle=\frac{3\alpha^2}{2}
\]
under counting measure.

\subsection{Proof of Proposition~\ref{prop:qoi}}

Let $T$ and $T'$ be two trees for which
\[
\Fw{T}-\Fw{T'}=\eps^2\Xi+r^\eps,
\qquad
\|r^\eps\|_1=o(\eps^2).
\]
For a bounded test function $\varphi$, H\"older's inequality gives
\[
\left|\int \varphi r^\eps\,d\mu\right|
\le \|\varphi\|_\infty\|r^\eps\|_1=o(\eps^2).
\]
Therefore,
\[
\int \varphi(\Fw{T}-\Fw{T'})d\mu
=\eps^2\int\varphi\Xi\,d\mu+o(\eps^2).
\]
If $\int\varphi\Xi\,d\mu\ne0$, division by $\eps^2$ and passage to the limit gives the claimed nonzero leading coefficient. Taking the absolute value yields the same leading magnitude because the nonzero limit has a fixed sign for all sufficiently small positive $\eps$.

For the full bounded unit ball, let $\QoI=\{\varphi \mid \|\varphi\|_\infty\le1\}$. The inequality
\[
\sup_{\|\varphi\|_\infty\le1}\left|\int\varphi\Xi\,d\mu\right|\le \|\Xi\|_1
\]
follows from H\"older's inequality. Conversely, if $\Xi$ is integrable, the measurable function $\operatorname{sgn}(\Xi)$ satisfies $\|\operatorname{sgn}(\Xi)\|_\infty\le1$ and gives
\[
\int \operatorname{sgn}(\Xi)\Xi\,d\mu=\|\Xi\|_1.
\]
Thus, the supremum equals $\|\Xi\|_1$.

\section{Proofs for Section 7}
 The corrected chart is an asymptotic representation. It need not coincide with the finite-$\eps$ output of the original divergence-balancing operation.

\subsection{\texorpdfstring{Affine normal forms and rotation coboundary}{Affine normal forms and rotation coboundary}}

A carried coefficient state is a triple $(G,H,B)$ with $G>0$ and with $H$ and $B$ zero-mass density coefficients. The corrected merge is algebraically defined on such states. When the state is interpreted in the local density chart, we additionally require $H/p,B/p\in L^\infty(\mu)$. The corrected merge is
\[
\begin{aligned}
&M_0((G_A,H_A,B_A),(G_B,H_B,B_B))\\
&\quad=\left(G_A+G_B,
\frac{G_AH_A+G_BH_B}{G_A+G_B},
\frac{G_AB_A+G_BB_B}{G_A+G_B}\right).
\end{aligned}
\]
For three blocks $A,B,C$, the left bracketing gives first the state for $AB$ and then
\[
G_{(AB)C}=G_A+G_B+G_C,
\]
\[
H_{(AB)C}=\frac{(G_A+G_B)H_{AB}+G_CH_C}{G_A+G_B+G_C}
=\frac{G_AH_A+G_BH_B+G_CH_C}{G_A+G_B+G_C},
\]
\[
B_{(AB)C}=\frac{(G_A+G_B)B_{AB}+G_CB_C}{G_A+G_B+G_C}
=\frac{G_AB_A+G_BB_B+G_CB_C}{G_A+G_B+G_C}.
\]
The right bracketing gives the same three expressions after replacing the first merge by $BC$. Therefore, the corrected merge is associative on triples. Induction over the number of leaves proves that every ordered binary tree has root state
\[
\left(\sum_iG_i,
\frac{\sum_iG_i h_i}{\sum_iG_i},
\frac{\sum_iG_i B_i}{\sum_iG_i}\right).
\]

For the induction, the one-leaf state is $(G_i,h_i,B_i)$. Suppose the two child subtrees have disjoint leaf sets $I$ and $J$ and satisfy
\[
\begin{aligned}
(G_I,H_I,B_I)
&=\left(\sum_{i\in I}G_i,
\frac{\sum_{i\in I}G_ih_i}{\sum_{i\in I}G_i},
\frac{\sum_{i\in I}G_iB_i}{\sum_{i\in I}G_i}\right),\\
(G_J,H_J,B_J)
&=\left(\sum_{j\in J}G_j,
\frac{\sum_{j\in J}G_jh_j}{\sum_{j\in J}G_j},
\frac{\sum_{j\in J}G_jB_j}{\sum_{j\in J}G_j}\right).
\end{aligned}
\]
Substitution into the corrected merge gives
\[
\begin{aligned}
G_{I\cup J}&=\sum_{\ell\in I\cup J}G_\ell,\\
H_{I\cup J}
&=\frac{G_IH_I+G_JH_J}{G_I+G_J}
=\frac{\sum_{\ell\in I\cup J}G_\ell h_\ell}
       {\sum_{\ell\in I\cup J}G_\ell},\\
B_{I\cup J}
&=\frac{G_IB_I+G_JB_J}{G_I+G_J}
=\frac{\sum_{\ell\in I\cup J}G_\ell B_\ell}
       {\sum_{\ell\in I\cup J}G_\ell}.
\end{aligned}
\]
This proves the induction step and the root formula.

We use the following auxiliary notation. A second-order affine normal form is a binary update of the form
\[
B_{AB}=\frac{G_AB_A+G_BB_B}{G_A+G_B}+\beta(A,B),
\]
where $\beta(A,B)$ is a zero-mass second-order coefficient assigned to the merge of adjacent blocks $A$ and $B$. For three adjacent blocks, expanding the left bracketing gives
\[
\begin{aligned}
B^{(AB)C}
&=\frac{G_A+G_B}{G_A+G_B+G_C}\left(\frac{G_AB_A+G_BB_B}{G_A+G_B}+\beta(A,B)\right)\\
&\quad+\frac{G_CB_C}{G_A+G_B+G_C}+\beta(AB,C)\\
&=\frac{G_AB_A+G_BB_B+G_CB_C}{G_A+G_B+G_C}
+\frac{G_A+G_B}{G_A+G_B+G_C}\beta(A,B)+\beta(AB,C).
\end{aligned}
\]
Similarly,
\[
\begin{aligned}
B^{A(BC)}
&=\frac{G_AB_A}{G_A+G_B+G_C}
+\frac{G_B+G_C}{G_A+G_B+G_C}\left(\frac{G_BB_B+G_CB_C}{G_B+G_C}+\beta(B,C)\right)\\
&\quad+\beta(A,BC)\\
&=\frac{G_AB_A+G_BB_B+G_CB_C}{G_A+G_B+G_C}
+\frac{G_B+G_C}{G_A+G_B+G_C}\beta(B,C)+\beta(A,BC).
\end{aligned}
\]
Subtracting the two terms cancels the weighted-average term. We call the remaining coefficient the elementary coboundary of $\beta$ and 
\[
\begin{aligned}
(\delta\beta)(A,B,C)
&=\frac{G_A+G_B}{G_A+G_B+G_C}\beta(A,B)+\beta(AB,C)\\
&\quad -\frac{G_B+G_C}{G_A+G_B+G_C}\beta(B,C)-\beta(A,BC).
\end{aligned}
\]
Thus, the elementary rotation coefficient is $\delta\beta$.

\subsection{Proof of the normal-form correction}

The uncorrected square-root chart has second-order update
\[
B_{AB}=\frac{G_AB_A+G_BB_B}{G_A+G_B}+\beta_f(A,B),
\]
where
\[
\beta_f(A,B)=\tau(G_A^2,G_B^2;H_A/p,H_B/p)(H_B-H_A).
\]
The corrected chart subtracts the non-averaging second-order term $\beta_f(A,B)$ at each merge; its elementary coboundary $\delta\beta_f$ is the corresponding rotation coefficient. Hence, the corrected second-order update is exactly the weighted average
\[
B^0_{AB}=\frac{G_AB^0_A+G_BB^0_B}{G_A+G_B}.
\]
The calculation in the previous subsection proves associativity of the corrected merge. By induction on the tree, the root corrected state is
\[
G_T=\sum_iG_i,
\quad
H_T=\frac{\sum_iG_ih_i}{\sum_iG_i}
\,\,\,\,\text{and}\,\,\,\,\,
B_T^0=\frac{\sum_iG_iB_i}{\sum_iG_i}.
\]
The induction has one-leaf base case $(G_i,h_i,B_i)$ and the internal-node step follows by substituting the two child formulas into the corrected weighted average.

When $H_T/p,B_T^0/p\in L^\infty(\mu)$, the polynomial $p+\eps H_T+\eps^2B_T^0$ is a density for all sufficiently small $|\eps|$ and represents the corrected coefficient state through second order. This coefficient-level construction does not itself provide a finite-$\eps$ corrected fusion operator or a binary remainder. If such an operator is defined separately and, at every generated merge, has remainder $R_\eps(A,B)$ satisfying $\|R_\eps(A,B)\|\le C|\eps|^3$ uniformly over the relevant corrected local chart, finite induction over a fixed-size tree propagates that conditional bound to its root whenever weighted averaging is nonexpansive in the chosen norm.

\section{Proofs for Section~\ref{sec:bounds}}\label{app:bounds}
The admissible tree set $\mathcal U$ is finite, all generated merges are nondegenerate, and the local expansion of Theorem~\ref{thm:tree} holds uniformly over $\mathcal U$ after shrinking the local chart neighborhood if necessary.

\subsection{Proof of Proposition~\ref{prop:diameter-asymptotic}}

For every tree $T\in\mathcal U$, Theorem~\ref{thm:tree} gives
\[
\Fw{T}=p+\eps H+\eps^2B_T+r_T^\eps \,\,\,\,\text{and}\,\,\,\,\,
\sup_{T\in\mathcal U}\|r_T^\eps\|=o(\eps^2).
\]
The uniform remainder follows because $\mathcal U$ is finite and each tree has $n-1$ internal nodes. For two trees $T,T'\in\mathcal U$,
\[
\Fw{T}-\Fw{T'}=\eps^2(B_T-B_{T'})+(r_T^\eps-r_{T'}^\eps).
\]
The reverse triangle inequality yields
\[
\left|\|\Fw{T}-\Fw{T'}\|-\eps^2\|B_T-B_{T'}\|\right|
\le \|r_T^\eps-r_{T'}^\eps\|
\le 2\sup_{S\in\mathcal U}\|r_S^\eps\|=o(\eps^2).
\]
Taking the supremum over all pairs in $\mathcal U$ gives
\[
\left|\AODfw^{\mathcal U}(p_1^\eps,\ldots,p_n^\eps)-\eps^2\Delta_2^{\mathcal U}\right|
\le 2\sup_{S\in\mathcal U}\|r_S^\eps\|=o(\eps^2),
\]
which proves the density-level expansion.

For the quantity-of-interest form, set
\[
\|g\|_{\QoI}=\sup_{\varphi\in\QoI}\left|\int_\Omega\varphi g\,d\mu\right|.
\]

Let
\[
M_{\QoI}=\sup_{\varphi\in\QoI}\|\varphi\|_\infty<\infty.
\]
For every $g\in L^1(\mu)$, H\"older's inequality gives
\[
\|g\|_{\QoI}
\le M_{\QoI}\|g\|_1.
\]
Thus, convergence in $L^1(\mu)$ implies convergence in the seminorm $\|\cdot\|_{\QoI}$. The reverse triangle inequality for seminorms gives
\[
\left|\|g+h\|_{\QoI}-\|g\|_{\QoI}\right|\le \|h\|_{\QoI}.
\]
Apply the last inequality with $g=\eps^2(B_T-B_{T'})$ and $h=r_T^\eps-r_{T'}^\eps$. Uniformly over $T,T'\in\mathcal U$,
\[
\begin{aligned}
\|r_T^\eps-r_{T'}^\eps\|_{\QoI}
&\le M_{\QoI}\|r_T^\eps-r_{T'}^\eps\|_1\\
&\le 2M_{\QoI}\sup_{S\in\mathcal U}\|r_S^\eps\|_1
=o(\eps^2).
\end{aligned}
\]
Taking the supremum over the finite set of tree pairs gives
\[
\AODfwQ^{\mathcal U}(p_1^\eps,\ldots,p_n^\eps)=\eps^2\Delta_{2,\QoI}^{\mathcal U}+o(\eps^2).
\]

\subsection{Proof of Theorem~\ref{thm:rotation-bound}}

Let $T=T_0,T_1,\ldots,T_m=T'$ be a rotation path whose trees all belong to $\mathcal U$. At step $j$, let $\rho_j$ be the elementary rotation sending $T_{j-1}$ to $T_j$. The two involved trees agree outside one local triple of adjacent blocks. At the rotated subtree root, let
\[D_0\in\{\Curv_{\rho_j},-\Curv_{\rho_j}\}\quad\text{and}\quad
\|D_0\|=\|\Curv_{\rho_j}\|,\]
according to the orientation of the rotation.
Both bracketings have the same transformed mass and first-order coefficient. Consider one ancestor merge of the rotated subtree with an unchanged sibling $S$. If $D_\ell$ is the second-order coefficient difference entering that ancestor, the two non-averaging terms agree because they depend only on the common first-order coefficients and transformed masses. The parent difference is therefore
\[
D_{\ell+1}
=\lambda_\ell D_\ell,
\qquad
\lambda_\ell=\frac{G_V}{G_V+G_S},
\]
where $V$ is the child containing the rotated subtree. Iteration through all ancestors gives
\[
B_{T_j}-B_{T_{j-1}}
=\left(\prod_{\ell}\lambda_\ell\right)D_0.
\]
More generally, the norm of the surrounding coefficient context is bounded by the stated modulus $L_{\rho_j}$. Thus,
\[
\|B_{T_j}-B_{T_{j-1}}\|\le L_{\rho_j}\|\Curv_{\rho_j}\|.
\]
The telescoping identity
\[
B_{T'}-B_T=\sum_{j=1}^m(B_{T_j}-B_{T_{j-1}})
\]
and the triangle inequality give
\[
\|B_T-B_{T'}\|\le \sum_{j=1}^m\|B_{T_j}-B_{T_{j-1}}\|
\le \sum_{j=1}^mL_{\rho_j}\|\Curv_{\rho_j}\|.
\]
The first inequality is therefore valid for every admissible path. Taking the infimum over all such paths between fixed endpoints and then the supremum over $T,T'\in\mathcal U$ gives the stated diameter bound.

\subsection{Proof of Corollary~\ref{cor:global-bound}}

For every admissible path of length at most $m_n$, the hypotheses give
\[
\sum_{\rho\in\gamma}L_\rho\|\Curv_\rho\|
\le \sum_{\rho\in\gamma}Lc
\le Lm_nc.
\]
Theorem~\ref{thm:rotation-bound} gives $\Delta_2^{\mathcal U}\le Lm_nc$. Proposition~\ref{prop:diameter-asymptotic} gives
\[
\AODfw^{\mathcal U}(p_1^\eps,\ldots,p_n^\eps)=\eps^2\Delta_2^{\mathcal U}+o(\eps^2)
\le \eps^2Lm_nc+o(\eps^2).
\]

\subsection{Proof of Theorem~\ref{thm:lower}}

Let $T$ and $T'$ be the endpoint trees of the elementary rotation edge $\rho$. Since both trees belong to $\mathcal U$, the restricted aggregation-order variation satisfies
\[
\AODfw^{\mathcal U}(p_1^\eps,\ldots,p_n^\eps)
\ge d(\Fw{T},\Fw{T'}).
\]
By definition of $K_\rho$ and by the tree expansion,
\[
\Fw{T'}-\Fw{T}=\eps^2K_\rho+o(\eps^2).
\]
The assumed metric expansion gives
\[
d(\Fw{T},\Fw{T'})=\eps^2\|K_\rho\|+o(\eps^2).
\]
Combining the two terms, dividing by $\eps^2$, and taking $\liminf_{\eps\downarrow0}$ gives
\[
\liminf_{\eps\downarrow0}\eps^{-2}\AODfw^{\mathcal U}(p_1^\eps,\ldots,p_n^\eps)
\ge \|K_\rho\|.
\]
When the rotation occurs at the root, Proposition~\ref{prop:rotation} identifies $K_\rho$ with $-\Curv(A,B,C)$ for the orientation $(AB)C\to A(BC)$ and with $\Curv(A,B,C)$ for the reverse orientation.

\subsection{Proof of Proposition~\ref{prop:scalar}}

Let the common supplied weight be $w_0$, and let $G_0=g(w_0)>0$ be the common transformed leaf mass. The lower merge of two leaves carries transformed mass $2G_0$. At the upper merge, the two-leaf subtree and the remaining leaf carry transformed masses $2G_0$ and $G_0$, respectively. Their endpoint balancing weights are therefore $(2G_0)^2=4G_0^2$ and $G_0^2$, respectively, and thus have ratio $4:1$.

For every $\lambda>0$,
\[
\frac{\sqrt{\lambda b}}{\sqrt{\lambda a}+\sqrt{\lambda b}}
=\frac{\sqrt b}{\sqrt a+\sqrt b},
\]
so $t_0(\lambda a,\lambda b)=t_0(a,b)$. In addition,
\[
\begin{aligned}
\tau(\lambda a,\lambda b;u_A,u_B)
&=\frac{A_3L_3\{\lambda a t_0^3+\lambda b(1-t_0)^3\}}
{2A_2J\{\lambda a t_0+\lambda b(1-t_0)\}}\\
&=\tau(a,b;u_A,u_B).
\end{aligned}
\]
Taking $\lambda=G_0^2$ shows that the lower and upper coefficients equal the coefficients computed for transformed leaf mass one. The first-order averages are also unchanged because
\[
\frac{G_0H_A+G_0H_B}{2G_0}=\frac{H_A+H_B}{2}.
\]
The complete witness derivation therefore gives
\[
\begin{aligned}
B^{(12)3}
=k\alpha^2\left(-\frac34,-\frac34,\frac32\right) \quad\text{and}\quad
B^{1(23)}
=k\alpha^2\left(\frac32,-\frac34,-\frac34\right).
\end{aligned}
\]
Subtracting yields

\[
B^{(12)3}-B^{1(23)}
=-\frac{3\alpha^2}{4}\frac{f'''(1)}{f''(1)}(1,0,-1).
\]
Since $\alpha\ne0$ and $f'''(1)\ne0$, the stated vector is nonzero. Scalar calibration therefore leaves a nonzero second-order tree-dependent coefficient on the finite three-state input family.

\end{document}